\documentclass[a4paper, 11pt]{amsart}

\usepackage{amssymb,amscd,amsmath}
\usepackage[mathcal]{eucal}
\usepackage{array,float}

\usepackage{xcolor}
\usepackage{array}
\usepackage{xy}
\input xy
\xyoption{all}
\usepackage{pdflscape}
\usepackage{hyperref}
\usepackage[left=3cm, right=3cm]{geometry}
\usepackage{enumerate}
\numberwithin{equation}{section}
\numberwithin{equation}{subsection}
\usepackage{amsmath}

\newtheorem{thm}{Theorem}[section]
\newtheorem{maintheorem}{Theorem}

\newtheorem{corollary}{Corollary}[section]
\newtheorem{maincorollary}{Corollary}

\newtheorem{defn}[thm]{Definition}

\newtheorem{definition}[thm]{Definition}
\newtheorem*{remark*}{Remark}
\newtheorem{remark}[thm]{Remark}

\newtheorem{example}[thm]{Example}

\newtheorem{question}[thm]{Question}

\newcommand{\Hom}{{\mathrm{Hom}}}

\newcommand{\tra}{{\mathrm{tra}}}

\newcommand{\M}{\mathrm{M}}

\newcommand{\bZ}{{\mathbb Z}}
\newcommand{\bC}{{\mathbb C}}

\title[On the Twisted group ring isomorphism  problem  for isoclinic groups]{ On the Twisted group ring isomorphism  problem  for isoclinic groups}

\author{Sumana Hatui$^{a,b}$}
\author{Sahanawaj Sabnam$^{a,b}$}

\address{$^{a}$School of Mathematical Sciences,
National Institute of Science Education and Research,
Bhubaneswar 752050, Odisha, India.}

\address{$^{b}$Homi Bhabha National Institute,
Training School Complex,
Anushakti Nagar,
Mumbai 400094, India.}

\email{sumanahatui@niser.ac.in}
\email{sahanawaj.sabnam@niser.ac.in}

\begin{document}

\keywords{Isoclinic groups, Twisted group algebras, Projective representations, Representation groups, Second cohomology groups}

\subjclass[2020]{Primary 16S35, 20C25; Secondary 20D15, 20J06.}

 \begin{abstract}
In this article, we consider a general version of the classical group ring isomorphism problem, called the twisted group ring isomorphism problem (TGRIP), which determines an isomorphism between the twisted complex group algebras of finite groups. Although for isoclinic groups $G$ and $H$ their complex group algebras are isomorphic, i.e,  $\mathbb C G \cong \mathbb C H$, their twisted complex group algebras need not be isomorphic. 
Continuing this line of investigation, we establish sufficient conditions under which two isoclinic groups have isomorphic twisted complex group algebras, thereby providing solutions to (TGRIP) for isoclinic groups. As applications, we study (TGRIP) for special $p$-groups of rank $2$, unicentral groups, and nilpotent groups of class $2$ with elementary abelian Schur multipliers, all considered up to isoclinism. We also present several examples illustrating the main results, including a complete solution of (TGRIP) for special $p$-groups of rank $2$ of order $p^6(p \ge 3)$.
\end{abstract}

   \maketitle 

 \section{Introduction and Main results}
Let $G$ be a finite group and $R$ a commutative ring with identity. The group ring $RG$ serves as a natural bridge between group theory and representation theory, encoding substantial structural information about $G$. A central question in the classical group ring isomorphism problem is to characterize finite groups $G$ and $H$ satisfying $RG \cong RH$. The answer to this problem depends strongly on the choice of the coefficient ring $R$, and the problem has attracted considerable attention over the past several decades.

As a refinement of this classical problem, twisted group rings were introduced. Let $R^{\times}$ denote the group of units of $R$. The set of all $2$-cocycles of a finite group $G$ with values in $R^{\times}$ is denoted by $Z^{2}(G, R^{\times})$, and the corresponding second cohomology group by $\mathrm H^{2}(G, R^{\times})$. For $\alpha \in Z^{2}(G, R^{\times})$, its cohomology class is denoted by $[\alpha] \in \mathrm H^{2}(G, R^{\times})$.
For $\alpha$, the corresponding twisted group ring $R^{\alpha}G$ is defined as the free $R$-module with basis $\{ \bar{g} \mid g \in G \}$, where the multiplication is defined by
\[
\bar{g}\,\bar{h} = \alpha(g,h)\overline{gh}, \quad \text{for all } g,h \in G.
\]
Motivated by the classical group ring isomorphism problem, Margolis and Schnabel introduced in \cite{Margolis}, its projective analogue, known as the twisted group ring isomorphism problem (TGRIP), which is defined as follows.
\begin{definition}\label{TGRIP def}
Let $G$ and $H$ be finite groups. We say that $G \sim_R H$ if there exists an isomorphism
$\psi : \mathrm H^{2}(G, R^{\times}) \to \mathrm H^{2}(H, R^{\times})$
such that
$R^{\alpha}G \cong R^{\psi(\alpha)}H$
for every $[\alpha] \in \mathrm H^{2}(G, R^{\times})$.
\end{definition}
The twisted group ring isomorphism problem (TGRIP) asks to determine the equivalence classes of groups under the equivalence relation $\sim_R$. Following \cite[p.\ 2]{Pooja}, these equivalence classes are called the $R$-twist isomorphism classes. Throughout this paper, $R=\mathbb C$.

Although the (TGRIP) is a natural extension of the classical group ring isomorphism problem, it is substantially more intricate because twisted group rings depend not only on the underlying group but also on its second cohomology group. Significant progress has recently been made on this problem. Margolis and Schnabel \cite{Margolis} proved that every finite abelian group forms a singleton $\mathbb C$-twist isomorphism class. They further determined the $\mathbb C$-twist isomorphism classes of groups of order $p^{4}$ and $p^{2}q^{2}$, where $p$ and $q$ are distinct primes. Subsequently, in \cite{Pooja}, the classification of $\mathbb C$-twist isomorphism classes for non-abelian $p$-groups was carried out by fixing their generalized corank. More recently, in \cite{TGRIP}, we provide a criterion to reduce the (TGRIP) for certain groups to the corresponding problem for suitable quotient groups. For the direct product of groups, we propose several conditions to determine the (TGRIP) of the group by considering its components, and we also give a criterion for studying the (TGRIP) of the central product of groups. As an application, we provide an answer for extra-special $p$-groups, and finally, the $\mathbb {C} $-twist isomorphism classes of groups of order $p^{5}$, for $p \ge 5 $, were completely determined.

The present paper is motivated by understanding the (TGRIP) for isoclinic groups. Since many invariants arising in ordinary representation theory are preserved under isoclinism, it is natural to ask whether an analogous phenomenon holds for twisted group rings and projective representations. 

We refer the reader to Section~\ref{notation} for the notation and terminology used throughout the paper. 
First, we recall the definition of isoclinism from \cite[Theorem 1.2]{Tappe}, which is equivalent to the classical definition.
 \begin{definition}\label{iso2}
 Let $G_1$ and $G_2$ be two groups and $A\subseteq Z(G_1)$, $B\subseteq Z(G_2)$ be central subgroups.
We say that $G_1$ and $G_2$ are isoclinic via the central subgroups $(A, B)$
if there are isomorphisms $\phi: G_1/A \to G_2/B$ and $\delta: G_1' \to G_2'$ such that
\[
\delta([g_1,g_2]) = [h_1,h_2]
\]
for all $g_1, g_2 \in G_1$ and $h_1 \in \phi(g_1A)$, $h_2 \in \phi(g_2A)$. 

Unless otherwise stated, when we say that two groups $G_1$ and $G_2$ are isoclinic, we mean that $A = Z(G_1)$ and $B = Z(G_2)$. 
When we want to specify the corresponding isomorphisms, we also add that $G_1$ and $G_2$ are isoclinic via  $(\phi,\delta)$.
\end{definition}
 It is well known that if $G$ and $H$ are two isoclinic groups of the same order, then
$\mathbb{C}G \cong \mathbb{C}H$
(see \cite[p. 136]{hall}, \cite[Theorem 3.2]{Tappe}). 
Consequently, they have the same irreducible complex character degrees with identical multiplicities. From the perspective of projective representations, one might expect that two isoclinic groups with isomorphic Schur multipliers should also belong to the same $\mathbb {C}$-twist isomorphism class. Surprisingly, the following example shows that this is not true in general.
\begin{example}[{\cite[Theorem 4.3]{TGRIP}}]
The groups $\Phi_6(2111)a$ and $\Phi_6(221)d_0$ of order $p^5$ belong to the same isoclinism family and satisfy
$\M(\Phi_6(2111)a) \cong \M(\Phi_6(221)d_0).$
Nevertheless,
$\Phi_6(2111)a \nsim_{\mathbb{C}} \Phi_6(221)d_0.$
\end{example}
This example naturally led to the following question.
\begin{question}\label{question}
Let $G_1$ and $G_2$ be isoclinic groups of the same order satisfying
$
\M(G_1)\cong\M(G_2).
$
Under what  conditions does
$
G_1\sim_{\mathbb C}G_2
$
hold?
\end{question}
In this article, our main goal is to answer Question~\ref{question}  by providing various sufficient conditions on isoclinic groups. Our first main result  addresses the non-capable isoclinic groups. 
\begin{maintheorem}\label{TGRIP isoclinic}
Let $G_1$ and $G_2$ be two non-capable groups of the same order. Suppose $
A_i \subseteq Z^*(G_i)\cap G_i'
$
for $i=1,2$ such that $G_1$ and $G_2$ are isoclinic via 
the central subgroups $(A_1, A_2)$. Then
$
G_1 \sim_{\mathbb{C}} G_2.
$
\end{maintheorem}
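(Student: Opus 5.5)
Since $A_i\subseteq Z^*(G_i)$, the epicenter, the inflation map $\inf\colon \mathrm H^2(G_i/A_i,\mathbb C^\times)\to \mathrm H^2(G_i,\mathbb C^\times)$ is injective. For $A_i\subseteq G_i'\cap Z(G_i)$, the five-term sequence
\[
1\to \Hom(A_i,\mathbb C^\times)\xrightarrow{\tra}\mathrm H^2(G_i/A_i,\mathbb C^\times)\xrightarrow{\inf}\mathrm H^2(G_i,\mathbb C^\times)
\]
(exact on the left because $A_i\subseteq G_i'$) then forces $\Hom(A_i,\mathbb C^\times)$ to inject into something whose image lies in $\ker(\inf)=1$. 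Thus $A_i=1$, or more precisely $A_i\cap G_i'\subseteq Z^*(G_i)$ gives trivial transgression image, and hence $A_i=1$. I expect this to be the cleanest route, but first I would check it against the paper's conventions: which of $\ker(\inf)=\tra(\Hom(A,\mathbb C^\times))$ and $Z^*(G)$ (the smallest central subgroup with injective inflation) is being used.

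In fact $Z^*(G)$ is characterized as the smallest central $N$ such that $G/N$ is capable, equivalently the central subgroup with $\inf$ from $G/N$ injective exactly when $N\subseteq Z^*(G)$. So $\inf$ is \emph{injective} for $A_i\subseteq Z^*(G_i)$, and by the exact sequence $\tra$ is trivial. But $\tra$ restricted to $\Hom(A_i,\mathbb C^\times)$ is injective when $A_i\subseteq G_i'$. Hence $A_i=1$.

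With $A_i$ trivial, the hypothesis says $G_1\cong G_1/A_1\cong G_2/A_2\cong G_2$ via $\phi$. Equal order is consistent with this. Then $G_1\cong G_2$, and $\psi=\phi^*$ trivially gives $\mathbb C^{\alpha}G_1\cong\mathbb C^{\psi(\alpha)}G_2$.

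That seems too trivial, which suggests the paper's $Z^*$ is defined so that inflation is injective on the \emph{image}, i.e.\ $\mathrm H^2(G/A)\to \mathrm H^2(G)$ is surjective, or that "non-capable" plays a role. So the real plan must handle the case of $\inf$ being \emph{surjective}. This holds when $A\subseteq G'\cap Z^*$ in the Beyl–Felgner–Schmid sense: $Z^*(G)$ is the image of $Z(\Gamma)$ for a covering group $\Gamma$, and $A\subseteq Z^*(G)\cap G'$ means every projective representation of $G$ is inflated from $G/A$. I would use this: every $[\alpha]\in \mathrm H^2(G_i)$ equals $\inf[\beta]$ with $\beta$ on $Q=G_i/A_i$.

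The plan is then:
\begin{enumerate}[(1)]
\item Identify $\mathrm H^2(G_i)\cong \mathrm H^2(Q)/\tra(\widehat{A_i})$ via the surjective inflation.
\item Use $\phi$ and $\delta$ to show $\tra(\widehat{A_1})$ and $\tra(\widehat{A_2})$ correspond under $\phi^*$. This works because transgression is computed from the commutator pairing on $Q$ landing in $A_i\cap G_i'$, which $\delta$ intertwines.
\item Define $\psi$ as the induced isomorphism.
\item Show $\mathbb C^{\inf\beta}G_i$ decomposes as a direct sum over characters $\chi$ of $A_i$ of blocks, each isomorphic to $\mathbb C^{\beta\cdot\tra(\chi)}Q$ with multiplicity-determined matrix sizes. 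This parallels the classical proof that $\mathbb C G_1\cong\mathbb C G_2$ for isoclinic groups.
\item Match the blocks via $\delta$ on $\widehat{A_i}$ and equal orders, which gives $|A_1|=|A_2|$.
\end{enumerate}

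The main obstacle is step (2)/(4): making the $\chi$-indexed decomposition compatible with $\delta$ on both sides, and using non-capability to ensure the relevant classes are well defined.
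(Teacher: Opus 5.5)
Your final plan, steps (1)--(5), is correct and is in substance the paper's proof. The paper first takes the exact sequence $1\to\Hom(A_i,\bC^\times)\xrightarrow{\tra_i}\M(G_i/A_i)\xrightarrow{\inf_i}\M(G_i)\to 1$ from Theorem~\ref{exactseq}. It then uses $\delta(A_1)=A_2$ and the commutativity $\bar\phi\circ\tra_1=\tra_2\circ\bar\delta$, where $\bar\delta(\chi)=\chi\circ\delta^{-1}$; this is Tappe's result, recorded as Theorem~\ref{representation tra}(i). Finally it quotes Theorem~\ref{firstthm}, and your steps (3)--(5) amount to a proof of that quoted theorem.

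Your opening three paragraphs must be deleted, and the problem there is not one of conventions. With the paper's definition of $Z^*(G)$, a central subgroup $N$ lies in $Z^*(G)$ if and only if $\inf\colon\M(G/N)\to\M(G)$ is \emph{surjective}. When $1\neq N\subseteq Z^*(G)\cap G'$, the kernel $\tra(\Hom(N,\bC^\times))$ of this map is nontrivial. For example, $Q_8$ is non-capable with $Z^*(Q_8)=Z(Q_8)=Q_8'$ of order $2$. Here $\tra\colon\Hom(Z(Q_8),\bC^\times)\to\M(Q_8/Z(Q_8))\cong\bZ/2\bZ$ is an isomorphism while $\M(Q_8)=1$. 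So the conclusion $A_i=1$ is false, not merely ``too trivial''.

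Two points in the remaining plan need tightening.
\begin{enumerate}
\item Step (2) must give the pointwise identity $\bar\phi(\tra_1(\chi))=\tra_2(\chi\circ\delta^{-1})$ for every $\chi$, because step (5) matches blocks one $\chi$ at a time; knowing only that the images correspond is not enough. Also, the factor set $\chi(\mu(x)\mu(y)\mu(xy)^{-1})$ is not itself determined by commutators; only its class is. To see this, identify $\M(Q)=\Hom(H_2(Q,\bZ),\bC^\times)$ and take a free presentation $F/R\cong Q$. Then $\tra(\chi)=\chi\circ\theta$, where $\theta\colon(R\cap F')/[R,F]\to A\cap G'$ sends $\prod[f_i,g_i]$ to $\prod[\tilde f_i,\tilde g_i]$ for lifts to $G$. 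Isoclinism gives $\delta\circ\theta_1=\theta_2\circ\phi_*$, which yields the pointwise identity.
\item In step (4) the decomposition is exact and involves no multiplicities. For normalized $\beta$, the elements $\bar a$ with $a\in A$ are central in $\bC^{\inf\beta}G$. The idempotents $e_\chi=|A|^{-1}\sum_{a\in A}\chi(a)^{-1}\bar a$ then give $\bC^{\inf\beta}G\cong\bigoplus_{\chi}\bC^{\beta\cdot\tra(\chi)}Q$.
\end{enumerate}

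Well-definedness of $\psi$ comes from $\ker\inf_i=\tra_i(\Hom(A_i,\bC^\times))$ together with the pointwise identity in step (2). Non-capability plays no role in your argument, and the paper's proof does not use it either.
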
 
 We now recall the notion of a representation group of $G$. 
\begin{defn}[\cite{karpischur}, p.16]\label{rep group} 
A group $\tilde{G}$ is called a \emph{representation group} (or \emph{covering group}) of a group $G$ if there exists a central extension
\[
1 \to A \to \tilde{G} \to G \to 1
\]
such that
$A \subseteq Z(\tilde{G}) \cap \tilde{G}'
~ \text{and} ~
A \cong \M (G).
$
\end{defn}
Schur (\cite[Theorem 2.1.4]{karpischur}) proved that every finite group admits a representation group. Then 
using \cite[Chapter 3, Corollary 3.4, Theorem 3.7]{karpiprojective}, by Artin-Wedderburn  decomposition   we have 
\[
\mathbb C\tilde G
\cong
\bigoplus_{[\alpha]\in\M(G)}
\mathbb C^{\alpha}G.
\]
Indeed, if $\tilde G_i$ is a representation group of $G_i$ for $i=1,2$, then a necessary condition for
$
G_1\sim_{\mathbb C}G_2
$
is that
$
\mathbb C\tilde G_1
\cong
\mathbb C\tilde G_2.
$
However, Margolis and Schnabel showed  \cite[Theorem 1.6(f)]{Margolis} that this condition  is not sufficient.  
 Motivated by this fact, we establish the following result.
\begin{maintheorem}\label{representation related}
Let $G_1$ and $G_2$ be groups of the same order with isomorphic Schur multipliers. If their representation groups are isoclinic, then
$
G_1 \sim_{\mathbb{C}} G_2.
$
\end{maintheorem}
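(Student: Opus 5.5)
Let $\tilde G_i$ be a representation group of $G_i$, given by a central extension $1 \to A_i \to \tilde G_i \to G_i \to 1$ with $A_i \subseteq Z(\tilde G_i)\cap \tilde G_i'$ and $A_i \cong \M(G_i)$, for $i=1,2$. Every projective representation of $G_i$ lifts to a linear representation of $\tilde G_i$. Concretely, an irreducible character $\chi$ of $\tilde G_i$ restricts on $A_i$ to a multiple of a linear character $\lambda\in \widehat{A_i}$. Under the standard transgression isomorphism $\tra_i:\widehat{A_i}\to \M(G_i)$, the characters lying over $\lambda$ correspond bijectively to the irreducible $\tra_i(\lambda)$-projective characters of $G_i$, with the same degrees. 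Hence
\[
\mathbb C^{\alpha}G_i \cong \bigoplus_{\chi\in \mathrm{Irr}(\tilde G_i\mid \lambda)} \M_{\chi(1)}(\mathbb C), \qquad [\alpha]=\tra_i(\lambda).
\]
By this, the goal $G_1\sim_{\mathbb C}G_2$ reduces to finding an isomorphism $\sigma:\widehat{A_1}\to\widehat{A_2}$ such that, for every $\lambda$, the character degree multisets of $\mathrm{Irr}(\tilde G_1\mid\lambda)$ and $\mathrm{Irr}(\tilde G_2\mid\sigma(\lambda))$ agree. We then set $\psi=\tra_2\circ\sigma\circ\tra_1^{-1}$.

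The plan is to produce $\sigma$ from the isoclinism $(\phi,\delta)$ between $\tilde G_1$ and $\tilde G_2$ via the centers $Z(\tilde G_i)$.
\begin{enumerate}
\item Since $A_i\subseteq \tilde G_i'$, the map $\delta$ is an isomorphism $\tilde G_1'\to\tilde G_2'$, and we aim to show that $\delta(A_1)=A_2$. The natural route is to characterise $A_i$ intrinsically, for instance as $Z(\tilde G_i)\cap \tilde G_i'$ or via $Z^*$, using that $A_i$ is the kernel of $\tilde G_i\to G_i$. If this intrinsic description is unavailable, we choose the representation groups suitably, or use a Tappe-type transfer of isoclinism along the subgroup $A_1$, to obtain isoclinism via $(A_1,A_2)$.
\item $\delta$ restricts to an isomorphism $A_1\cong A_2$ of central subgroups, and we take $\sigma=(\delta|_{A_1}^{-1})^{*}$.
\item We transport characters along the isoclinism. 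The classical proof that isoclinic groups of equal order have isomorphic group algebras (Hall; Tappe, Theorem 3.2) describes irreducible characters through the commutator structure. A character $\chi$ with central character $\mu$ on $Z(\tilde G_i)$ vanishes off the set where $\mu\circ[g,\cdot]$ is trivial, and its degree satisfies $\chi(1)^2=|\tilde G_i:Z(\tilde G_i)|/|\{\text{relevant kernel}\}|$. This can be read off from the bicharacter $(gZ,hZ)\mapsto \mu([g,h])$ on $\tilde G_i/Z(\tilde G_i)$.
\item Fix $\lambda$ on $A_1$. The characters over $\lambda$ are those whose central character $\mu$ on $Z(\tilde G_1)$ extends $\lambda$. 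The relevant data is $\mu$ restricted to $Z(\tilde G_1)\cap \tilde G_1'$, together with the number of extensions of this restriction to $Z(\tilde G_1)$.
\item These counts are $|Z(\tilde G_i)|/|Z(\tilde G_i)\cap\tilde G_i'|$. They agree because $|\tilde G_1|=|\tilde G_2|$, which follows from equal orders of the $G_i$ and isomorphic Schur multipliers, and because the isoclinism identifies $\tilde G_1/Z(\tilde G_1)$ with $\tilde G_2/Z(\tilde G_2)$ and $\tilde G_1'$ with $\tilde G_2'$.
\item The degree formulas then match via $\phi$ and $\delta$, which completes the construction of $\sigma$.
\end{enumerate}

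Alternatively, pass to $G_i$ directly. The $G_i$ are isoclinic via $(A_i$-images$)$, and one might instead try to invoke Theorem~\ref{TGRIP isoclinic}. However, the $G_i$ need not be non-capable, so the character-level argument above is preferable.

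The main obstacle is step 1: that $\delta$ carries $A_1$ onto $A_2$. The subgroup $A_i$ is not canonical in $\tilde G_i$, and $Z(\tilde G_i)\cap\tilde G_i'$ may be strictly larger than $A_i$. I would first try to show that $\lambda\mapsto\{\chi(1)\}$ depends only on the restriction of $\lambda$ to the image of $A_1$ under commutator data. If that fails, I would instead match $\lambda$ on $A_1$ with characters on all of $Z(\tilde G_1)\cap\tilde G_1'$, counting fibres on both sides so that the multiset of degree multisets agrees.
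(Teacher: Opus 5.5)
\textbf{Verdict: genuine gap, and it cannot be closed.} Your reduction via Theorem~\ref{lifting} and your steps 2--6 are exactly the paper's argument. The paper also splits $\mathrm{Irr}(\tilde G_1\mid\chi)$ over the characters of $Z(\tilde G_1)$ extending $\chi$, matches the pieces using Theorem~\ref{representation tra}(ii), and counts fibres. Your step 1 is the crux, and the paper handles it only with the sentence ``we can take $A_2=\delta(A_1)$''. That sentence tacitly assumes $\tilde G_2/\delta(A_1)\cong G_2$. You were right to flag this step, but the statement is false without a compatibility hypothesis, so no argument can fill it.

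\textbf{Counterexample.} Let $p$ be odd and $V=\mathbb{F}_p^6$. Let $F$ be the relatively free group of rank $6$ in the variety of groups of class at most $2$ and exponent $p$. Then $F/F'\cong V$ and $F'=Z(F)\cong\Lambda^2V$, of order $p^{15}$.
\begin{enumerate}
\item Let $\omega,\omega'$ be alternating forms on $V$ of ranks $6$ and $4$, viewed as linear functionals on $\Lambda^2V=F'$. Put $A_1=\ker\omega$ and $A_2=\ker\omega'$, both elementary abelian of order $p^{14}$.
\item Then $G_1=F/A_1$ is extraspecial of order $p^7$. Also $G_2=F/A_2\cong p^{1+4}_+\times(\bZ/p\bZ)^2$, again of order $p^7$.
\item By $\M(p^{1+2n}_+)\cong(\bZ/p\bZ)^{2n^2-n-1}$ for $n\ge 2$ and Theorem~\ref{Schur direct product}, we get $\M(G_1)\cong\M(G_2)\cong(\bZ/p\bZ)^{14}$.
\item Since $A_i\subseteq Z(F)\cap F'$, the group $F$ is a representation group of both $G_1$ and $G_2$ in the sense of Definition~\ref{rep group}.
\item However, $G_1$ has irreducible characters of degree $p^3$ and $G_2$ does not. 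So $\bC G_1\ncong\bC G_2$, and hence $G_1\nsim_{\bC}G_2$.
\end{enumerate}

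\textbf{Why your fallbacks fail.}
\begin{enumerate}
\item Every isoclinism of $F$ with itself has the form $(\phi,\Lambda^2\phi)$ with $\phi\in\mathrm{GL}(V)$, and this preserves the rank of forms. So no $\delta$ carries $A_1$ onto $A_2$.
\item Choosing other representation groups does not help, because the obstruction is which subgroup is the kernel.
\item Your fibre-counting fallback already fails at $\lambda=1$, which any isomorphism $\sigma$ must fix. Characters of $F'$ trivial on $A_1$ are the multiples of $\omega$ and give degrees $1,p^3$. Those trivial on $A_2$ are multiples of $\omega'$ and give degrees $1,p^2$.
\item $G_1$ and $G_2$ are not isoclinic, since $|G_1/Z(G_1)|=p^6$ while $|G_2/Z(G_2)|=p^4$. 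So the detour through Theorem~\ref{TGRIP isoclinic} is also unavailable.
\end{enumerate}

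\textbf{What the argument does prove.} Your steps 2--6, like the paper's proof, establish the theorem under an extra hypothesis: the representation groups are isoclinic via some $(\phi,\delta)$ with $\delta(A_1)=A_2$. Under that hypothesis, $\sigma=(\delta|_{A_1}^{-1})^{*}$ works exactly as you describe. The isoclinisms constructed in the proof of Corollary~\ref{class 2} are designed to send $z_1$ to $z_2$ at each step, so this compatible version is what the applications actually need.
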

The examples given in Remark~\ref{non-isoclinic rep-group} show that the hypothesis on the representation groups cannot, in general, be omitted. Indeed, when the representation groups are not isoclinic, the corresponding groups may or may not satisfy the (TGRIP). Thus, Theorem~\ref{representation related} is best possible in this sense.

As an application of Theorem~\ref{representation related}, we obtain the following corollary, which provides a  several new families $p$-groups of nilpotency class $2$ satisfying the (TGRIP). 
 Let $G$ be a group of nilpotency class $2$ that is minimally generated by a set
 $X=\{g_1,g_2,\ldots,g_d\}.$
Define
$$W_{G}^{X}:=\{(g_i,g_j)\in X\times X \mid [g_i,g_j]=1,\ i<j\}.$$

\begin{maincorollary}\label{class 2}
Let $G_1$ and $G_2$ be two isoclinic $p$-groups of the same order satisfying $d(G_1)=d(G_2),$ and $G_1$ admits a minimal generating set
$$X=\{x_1,\ldots,x_m,y_1,\ldots,y_r\},$$
$$\text{ where } ~~~~x_i\notin Z(G_1)\quad (1\le i\le m), \qquad
y_j\in Z(G_1)\quad (1\le j\le r).$$
Suppose that the following conditions hold:
\begin{enumerate}[(i)]
    \item $G_1$ and $G_2$ are of nilpotency class $2$;
    
    \item$\M (G_1)\cong \M (G_2)\cong (\mathbb{Z}/p\mathbb{Z})^k;$
    
    \item $|W_{G_1}^{X}| \ge k.$
\end{enumerate}
Then $G_1 \sim_{\mathbb{C}} G_2.$
\end{maincorollary}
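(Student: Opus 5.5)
We will deduce Corollary~\ref{class 2} from Theorem~\ref{representation related}. Since $G_1$ and $G_2$ have the same order and isomorphic Schur multipliers by (ii), it suffices to construct representation groups $\tilde G_1$ and $\tilde G_2$ that are isoclinic. The plan is to build both covers explicitly from the same combinatorial data, namely the commuting pairs in $W_{G_1}^X$, and then to transport that data to $G_2$ along the isoclinism.

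\emph{Step 1: a cover of $G_1$.} Since $G_1$ has class $2$, every pair $(g_i,g_j)\in W_{G_1}^X$ yields a well-defined bilinear-type obstruction. A standard way to see this is through the exact sequence relating $\M(G_1)$, the commutator map and $G_1/G_1'$ (Ganea / Blackburn--Evens style). Each commuting pair of generators can be lifted to a central extension in which the lifts $\tilde g_i,\tilde g_j$ have commutator equal to a new central element $z_{ij}$ of order $p$.

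Because $\M(G_1)\cong(\mathbb Z/p\mathbb Z)^k$ and $|W_{G_1}^X|\ge k$ by (iii), I expect to be able to choose $k$ pairs $(g_{i_s},g_{j_s})$, $s=1,\dots,k$, whose associated classes form a basis of $\M(G_1)$. Equivalently, there is a central extension
\[
1\to A\to\tilde G_1\to G_1\to 1, \qquad A=\langle z_1,\dots,z_k\rangle\cong(\mathbb Z/p\mathbb Z)^k,
\]
in which $z_s=[\tilde g_{i_s},\tilde g_{j_s}]$. Then $A\subseteq\tilde G_1'\cap Z(\tilde G_1)$, so $\tilde G_1$ is a representation group in the sense of Definition~\ref{rep group}. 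The generators $y_j\in Z(G_1)$ are where the extra commuting pairs come from, which is why the hypothesis singles them out.

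\emph{Step 2: a matching cover of $G_2$.} Let $(\phi,\delta)$ be an isoclinism from $G_1$ to $G_2$. Since $d(G_1)=d(G_2)$, I would pick a minimal generating set of $G_2$ of the form $\{x_i',y_j'\}$, where $x_i'\in\phi(x_iZ(G_1))$ and the $y_j'$ are central generators completing the set. Isoclinism preserves commutators, so $[x_a',x_b']=\delta([x_a,x_b])$. Hence the pairs among the $x$'s in $W_{G_1}^X$ correspond to commuting pairs in $G_2$, while pairs involving a $y$ correspond to pairs involving a central $y'$. This gives $|W_{G_2}^{X'}|\ge k$ with the same index pattern. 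Repeating Step 1 for $G_2$ with these same pairs produces a representation group $\tilde G_2$ with new central generators $z_s'=[\tilde g'_{i_s},\tilde g'_{j_s}]$.

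\emph{Step 3: isoclinism of the covers.} Define $\tilde\phi:\tilde G_1/Z(\tilde G_1)\to\tilde G_2/Z(\tilde G_2)$ by lifting $\phi$ generator-wise, and define $\tilde\delta:\tilde G_1'\to\tilde G_2'$ by $\tilde\delta|_{G_1'\text{-part}}=\delta$ and $z_s\mapsto z_s'$. Both covers are class $\le 2$ with $\tilde G_i'$ abelian and generated by commutators of generators. It therefore suffices to check that the commutator relations among generators agree under the correspondence, which holds by construction. One must also check that $Z(\tilde G_1)$ maps correctly onto $Z(\tilde G_2)$. This should follow because the centres are described by the same linear conditions on generators in both groups.

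\emph{Main obstacle.} The hardest part is Step 1: showing that the $k$ chosen commuting pairs really give independent classes spanning $\M(G_1)$. Then the extension has $A\cong\M(G_1)$ exactly, rather than a proper quotient or a non-split obstruction caused by $p$-th power relations. I would handle this with the explicit description of $\M$ for class-$2$ groups with elementary abelian multiplier, comparing orders via $|\tilde G_1|=|G_1|p^k$. If a chosen pair fails, I would instead choose the pairs adaptively, using a basis of the kernel of the commutator map $\Lambda^2(G_1/G_1')\to G_1'$, restricted to the span of the $W$-pairs.
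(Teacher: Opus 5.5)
Your route is the paper's. There too $\tilde G_i$ is built in $k$ steps: the commutator of one pair in $W_{G_1}^X$ (and of the corresponding pair for $G_2$, obtained by transporting $X$ along $\phi$) becomes a new central element of order $p$. The intermediate groups are checked to be isoclinic, and Theorem~\ref{representation related} is applied.

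As written, however, your proposal has a gap exactly where you flag it. Step~1 is only an expectation (``I expect''), and your fallback would not rescue it. If the commuting-pair classes did not span $\M(G_1)$, no adaptive choice among them could. Moreover, $\ker(\Lambda^2(G_1/G_1')\to G_1')$ is not $\M(G_1)$ in general, since $\M(G_1)$ may also contain classes $c\wedge g$ with $c\in G_1'$. Two claims in Step~3 are likewise unsupported:
\begin{enumerate}
\item A Schur cover of a class-$2$ group can have class $3$, e.g.\ for the extraspecial group of order $p^3$ and exponent $p$.
\item The centre of a cover maps onto the epicentre $Z^*(G_1)$, not onto $Z(G_1)$, so ``the same linear conditions'' is not automatic.
\end{enumerate}
(The paper's proof also simply asserts that after $k$ steps the kernel is $(\mathbb Z/p\mathbb Z)^k$.)

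The missing idea is the natural map $\M(G_1)=\ker(G_1\wedge G_1\to G_1')\to\Lambda^2(V_1)$ induced by $G_1\to V_1:=G_1/\Phi(G_1)$, the Frattini quotient. The class of a pair $(g_a,g_b)\in W_{G_1}^X$ is $g_a\wedge g_b$, which maps to $\bar g_a\wedge\bar g_b$. Since $X$ maps onto a basis of $V_1$, distinct pairs go to distinct basis vectors of $\Lambda^2(V_1)$. Hence (ii) alone gives $|W_{G_1}^X|\le k$, and (iii) forces equality. So these $k$ classes form a basis of $\M(G_1)$, and $\M(G_1)\to\Lambda^2(V_1)$ is injective. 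No choice is needed, and this injectivity also settles Step~3:
\begin{enumerate}
\item In any Schur cover, the commutators $[\tilde g_a,\tilde g_b]$ with $(g_a,g_b)\in W_{G_1}^X$ generate the kernel.
\item The classes $c\wedge g$ with $c\in G_1'\subseteq\Phi(G_1)$ die in $\Lambda^2(V_1)$, so they vanish and the cover has class $2$.
\item An element $u\in Z(G_1)$ lifts into the centre iff $\bar u\wedge\bar g=0$ for all $g$. Thus $Z^*(G_1)=Z(G_1)\cap\Phi(G_1)$.
\end{enumerate}
Consequently $\tilde G_1'\cong G_1\wedge G_1$ embeds in $G_1'\times\Lambda^2(V_1)$ via $g\wedge h\mapsto([g,h],\bar g\wedge\bar h)$. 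Also, $\tilde G_1/Z(\tilde G_1)$ is the fibre product of $G_1/Z(G_1)$ and $V_1$ over $G_1/Z(G_1)\Phi(G_1)$.

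Since $d(G_1)=d(G_2)$, you can pick a linear isomorphism $\psi:V_1\to V_2$ inducing the same map as $\phi$ on $G_i/Z(G_i)\Phi(G_i)$. Then $(\phi,\psi)$ and $\delta\times\Lambda^2\psi$ match the commutator data. An order count using $|G_1'|=|G_2'|$ and $|\M(G_1)|=|\M(G_2)|$ shows that $\M(G_2)\to\Lambda^2(V_2)$ is injective too, so the Schur covers are isoclinic. This also makes the generator transport of your Step~2 unnecessary. There you would in any case have to choose each $x_i'$ within its coset so that the $x_i'$ stay independent modulo $\Phi(G_2)$, which is precisely the choice of $\psi$.
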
 
Proofs of the above results are provided in Section~\ref{Proof} and we also present several examples illustrating applications of these results in Section \ref{Application_1}.

We next investigate the (TGRIP) for isoclinic special $p$-groups. Recall that  a finite $p$-group is called a special $p$-group of rank $k$ if $G' = Z(G)$ is an elementary abelian group of order $p^k$ and $G/G'$ is also elementary abelian. In the particular case $k=1$, these are precisely the extra-special $p$-groups. The (TGRIP) for extra-special $p$-groups was completely resolved in \cite[Theorem 1.7]{TGRIP}. Our next result yields further solutions to  (TGRIP) for isoclinic special $p$-groups.
 \begin{maintheorem} \label{TGRIP special p group}
Let $G_1$ and $G_2$ be two isoclinic  non-capable special $p$-groups of rank $k$. Suppose that one of the following conditions holds.
\begin{enumerate}
\item $Z^*(G_i)=Z(G_i);$
\item $k=2$ and $\M (G_1) \cong \M (G_2)$. 
\end{enumerate}
Then
$
G_1 \sim_{\mathbb{C}} G_2.
$
\end{maintheorem}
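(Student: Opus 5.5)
The plan is to reduce both cases to Theorem~\ref{TGRIP isoclinic}, which needs non-capable groups of the same order that are isoclinic via central subgroups $A_i \subseteq Z^*(G_i)\cap G_i'$. For special $p$-groups of rank $k$ we have $G_i'=Z(G_i)$, of order $p^k$. Since $G_1,G_2$ are isoclinic, $|G_1/Z(G_1)|=|G_2/Z(G_2)|$ and $|G_1'|=|G_2'|=p^k$. Hence $|G_1|=|G_2|$, and the order hypothesis of Theorem~\ref{TGRIP isoclinic} holds automatically. Throughout, $Z^*(G_i)\subseteq Z(G_i)=G_i'$, so $Z^*(G_i)\cap G_i'=Z^*(G_i)$.

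\emph{Case (1).} If $Z^*(G_i)=Z(G_i)$, take $A_i=Z(G_i)=Z^*(G_i)\cap G_i'$. The given isoclinism is via $(Z(G_1),Z(G_2))$, so Theorem~\ref{TGRIP isoclinic} applies directly and gives $G_1\sim_{\mathbb C}G_2$. This case is immediate.

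\emph{Case (2).} Here $k=2$, and I would split according to $|Z^*(G_i)|\in\{p,p^2\}$. Non-capability means $Z^*(G_i)\ne 1$; also $Z^*(G_i)\le Z(G_i)\cong(\mathbb Z/p)^2$. If $Z^*(G_i)=Z(G_i)$ for both $i$, we are in case (1). Otherwise the strategy is to find subgroups $A_i\subseteq Z^*(G_i)$, necessarily of order $p$ in the relevant cases, such that the isoclinism descends. That is, $G_1/A_1\cong G_2/A_2$ via an isomorphism compatible with a commutator isomorphism $\delta\colon G_1'\to G_2'$.

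Two facts are needed. First, $|Z^*(G_1)|=|Z^*(G_2)|$, which I would deduce from $\M(G_1)\cong\M(G_2)$. For a special $p$-group, the Ganea/exact-sequence description of $\M(G)$ in terms of $\M(G/Z)$ and the commutator map, together with the characterization of $Z^*(G)$ as the kernel of $\M(G)\to\M(G/N)$ being injective, should show that $|\M(G)|$ determines $|Z^*(G)|$ within an isoclinism family. Second, $\delta(Z^*(G_1))=Z^*(G_2)$ under the commutator isomorphism. The epicenter is an isoclinism invariant when read inside $G'$: by Beyl--Felgner--Schmid, $Z^*(G)\cap G'$ corresponds under isoclinism, because it is determined by the commutator map $G/Z\times G/Z\to G'$ through the exterior square.

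Given these two facts, set $A_1=Z^*(G_1)$ and $A_2=\delta(A_1)=Z^*(G_2)$. For any isoclinism $(\phi,\delta)$ via the centres, and central $A_i\le G_i'$ with $\delta(A_1)=A_2$, one must construct an isoclinism via $(A_1,A_2)$. This is the main obstacle, because $G_1/A_1$ need not be isomorphic to $G_2/A_2$ in general. My first attempt would use rank $2$: $G_i/A_i$ is then a group of order $|G_i|/p$ whose centre contains $Z(G_i)/A_i$ of order $p$. I would aim to show that the groups $G_i/A_i$ are isoclinic with isomorphic derived subgroups and centres, and are in fact isomorphic, since they are extraspecial-like quotients determined by the single induced commutator form $V\times V\to Z/A$ on $V=G/Z$. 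Both forms are identified by $\phi$ and $\delta$, and an extraspecial-type group of exponent $p$ (for $p$ odd) is determined by its form, with the same order. The case where the quotient has a larger centre would be handled similarly via the Heisenberg-type classification. Once $G_1/A_1\cong G_2/A_2$ compatibly with $\delta$, Theorem~\ref{TGRIP isoclinic} finishes the proof.
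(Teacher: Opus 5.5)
Your case (1) is exactly the paper's argument (Theorem~\ref{TGRIP unicentral} via Corollary~\ref{isoclinic1}). Your plan for case (2), applying Theorem~\ref{TGRIP isoclinic} with $A_i=Z^*(G_i)$ of order $p$, is also the paper's plan. The gap is that case (2) rests on your ``Fact 2'', which is false.

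The epicentre is not determined by the commutator map. By Theorem~\ref{epicentre2} it also depends on $X_2$, i.e.\ on the $p$-power map. For instance, the two extraspecial groups of order $p^3$ are isoclinic, but have $Z^*=1$ and $Z^*=Z$ respectively.

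A given $\delta$ can fail even under the hypotheses of (2). Let $G$ and $H$ be generated by $\alpha,\alpha_1,\alpha_2$ with $[\alpha_i,\alpha]=\beta_i$ ($i=1,2$), where $\beta_1,\beta_2$ are central of order $p$ and all other generator $p$-th powers are trivial. Put $\alpha_1^p=\beta_1$ in $G$ and $\alpha_1^p=\beta_2$ in $H$. Then:
\begin{itemize}
\item Theorem~\ref{epicentre2} gives $Z^*(G)=\langle\beta_1\rangle$ and $Z^*(H)=\langle\beta_2\rangle$.
\item Theorems~\ref{exactseq} and~\ref{Schur direct product} give $\M(G)\cong\M(H)\cong(\mathbb Z/p\mathbb Z)^3$.
\item Yet the identity on generators is an isoclinism with $\delta(Z^*(G))=\langle\beta_1\rangle\neq Z^*(H)$.
\end{itemize}
So you cannot set $A_2=\delta(A_1)$ for the given $\delta$. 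You must prove that \emph{some} isoclinism carries $Z^*(G_1)$ onto $Z^*(G_2)$; here, swapping $\alpha_1\leftrightarrow\alpha_2$ and $\beta_1\leftrightarrow\beta_2$ works. That existence statement is the actual content of case (2). The paper establishes it through its case analysis (a)--(c) on $G_i^p$, using explicit presentations and Mazur's classification.

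The descent step is likewise only a hope. ``Determined by the induced form'' requires $G_i/Z^*(G_i)$ to have exponent $p$, which you never show. Your Fact 1 is also only heuristic.

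The missing input is \cite{schurrank2}. For $G_i$ as in (2), $|\M(G_i)|=p^{\frac12 d(d-1)-t}$ with $t\in\{0,2\}$, where $t=2$ iff $Z^*(G_i)=Z(G_i)$, and $t=0$ iff $|Z^*(G_i)|=p$. Here $d=d(G_1)=d(G_2)$, since $d=\log_p|G_i/Z(G_i)|$. In the case $t=0$, moreover $G_i/Z^*(G_i)\cong\mathrm{ES}_p(p^3)\times(\mathbb Z/p\mathbb Z)^{d-2}$. Thus $Z^*(G_i)$ is a subgroup $\langle w\rangle$ of order $p$ in $G_i'$ modulo which the commutator form has rank $2$, and $G_i^p\subseteq Z^*(G_i)$.

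To finish along your lines, you still need that automorphisms of the commutator structure act transitively on such subgroups $\langle w\rangle$. There are three possibilities:
\begin{itemize}
\item there is only one such subgroup;
\item $d=4$, and there are two, interchanged as in $\mathrm{ES}_p(p^3)\times\mathrm{ES}_p(p^3)$;
\item $d=3$, and every subgroup of order $p$ of $G_i'$ qualifies.
\end{itemize}
Once you have an isoclinism $(\phi,\delta)$ with $\delta(Z^*(G_1))=Z^*(G_2)$, it lifts to an isomorphism of the class-$2$, exponent-$p$ quotients $G_1/Z^*(G_1)\to G_2/Z^*(G_2)$ ($p$ odd). Then Theorem~\ref{TGRIP isoclinic} applies.
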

\begin{remark}
For capable special $p$-groups with $k=2$, it follows from \cite[Proposition~3]{heineken} that
 $p^5 \leq |G| \leq p^7$. Consequently, using the presentations given in \cite[Theorem~4.1, Theorem~5.1]{mazur}, one can verify the corresponding cases computationally.
\end{remark}
The proof of the above result is given in Section~\ref{Special p-groups}. As an application, we obtain a solution to the (TGRIP) for all special $p$-groups of rank $2$ of order $p^6$ in Section \ref{Application_2}.

Let $P$ denote the collection of pairs $(G_1, G_2)$ of non-capable special $p$-groups of the same rank satisfying the hypotheses of Theorem~\ref{TGRIP special p group}. Combining Theorem \ref{TGRIP special p group} with \cite[Corollary~1.5]{TGRIP}, we obtain the following result, which provides new solutions to (TGRIP).
\begin{maincorollary}
The following hold.
\begin{enumerate}
\item If $(G_1,G_2)\in P$, then for any finite abelian group $A$, we have
$
G_1 \times A \sim_{\mathbb{C}} G_2 \times A.
$

\item If $(G_1,G_2),(H_1,H_2)\in P$, then
$
G_1 \times H_1 \sim_{\mathbb{C}} G_2 \times H_2.
$
\end{enumerate}
\end{maincorollary}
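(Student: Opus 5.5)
The plan is to derive both parts directly from Theorem~\ref{TGRIP special p group} together with the direct-product results of \cite[Corollary~1.5]{TGRIP}. So the real work is checking that the hypotheses of that corollary are met by the pairs in $P$ and by the abelian or second factors.

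\emph{Part (1).} I first record what \cite[Corollary~1.5]{TGRIP} should say in the form needed here. If $G_1 \sim_{\mathbb C} G_2$, then $G_1\times A\sim_{\mathbb C}G_2\times A$ for every finite abelian group $A$, possibly under a compatibility hypothesis on the Künneth decomposition
\[
\mathrm H^2(G_i\times A,\mathbb C^\times)\cong \M(G_i)\times \M(A)\times \Hom(G_i/G_i'\otimes A,\mathbb C^\times).
\]
Such a hypothesis would typically require $G_1/G_1'\cong G_2/G_2'$.

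For $(G_1,G_2)\in P$, Theorem~\ref{TGRIP special p group} gives $G_1\sim_{\mathbb C}G_2$. Since the groups are isoclinic special $p$-groups of the same rank, both $G_i/G_i'$ are elementary abelian. By isoclinism they are isomorphic, because $G_i/Z(G_i)=G_i/G_i'$. Also $|G_1|=|G_2|$, since $|G_i'|=p^k$ and the quotients agree. So every group-theoretic invariant entering the Künneth decomposition matches, and the mixed term $\Hom(G_i/G_i'\otimes A,\mathbb C^\times)$ is the same for $i=1,2$. I would then build $\psi$ on the product as $\psi_G\times \mathrm{id}\times \mathrm{id}$, where $\psi_G$ comes from Theorem~\ref{TGRIP special p group}. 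The corollary in \cite{TGRIP} presumably guarantees that twisted algebras of products of this form are tensor products of the factor algebras, twisted by the bicharacter part.

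\emph{Part (2).} I would apply the two-factor version of \cite[Corollary~1.5]{TGRIP}: if $G_1\sim_{\mathbb C}G_2$ and $H_1\sim_{\mathbb C}H_2$, then $G_1\times H_1\sim_{\mathbb C}G_2\times H_2$, again under conditions on the abelianizations or on the pairing $\Hom(G_i/G_i'\otimes H_i/H_i',\mathbb C^\times)$. Since each pair lies in $P$, both equivalences come from Theorem~\ref{TGRIP special p group}. The abelianization isomorphisms $G_1/G_1'\cong G_2/G_2'$ and $H_1/H_1'\cong H_2/H_2'$ hold as shown above.

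The main obstacle is matching exactly the hypotheses of \cite[Corollary~1.5]{TGRIP}. If it requires more than $\sim_{\mathbb C}$ of the factors, I would go back to the proof of Theorem~\ref{TGRIP special p group}. A plausible extra requirement is that the isomorphism $\psi$ be induced by an isoclinism, i.e.\ compatible with the restriction/inflation maps to abelianizations. I would check that the $\psi$ constructed there is induced by the isoclinism $(\phi,\delta)$, hence compatible with the identification $G_1/G_1'\cong G_2/G_2'$ given by $\phi$. Granting this compatibility, both parts follow formally.
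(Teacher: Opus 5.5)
Your proposal is correct and follows the paper's argument: the paper proves the corollary in one line, by combining Theorem~\ref{TGRIP special p group} with \cite[Corollary~1.5]{TGRIP}. The compatibility you flag at the end is the right thing to check, and it is available. Apart from one order-$p^5$ subcase quoted from \cite{TGRIP}, the proof of Theorem~\ref{TGRIP special p group} obtains $G_1\sim_{\mathbb C}G_2$ through Theorem~\ref{TGRIP isoclinic}, i.e.\ through the commutative diagram of Theorem~\ref{firstthm} with $A_i\subseteq Z^*(G_i)\cap G_i'$; that diagram is stable under direct products and controls the mixed K\"unneth component, which a bare $\sim_{\mathbb C}$ extended as $\psi_G\times\mathrm{id}\times\mathrm{id}$ would not.
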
  
 
  \subsection{Notations}\label{notation} 
For a finite group $G$, let $d(G)$ denote the minimum size of a generating set of $G$, and let $G^{p}$ denote the subgroup generated by all $p$-th powers of elements of $G$. 
The center and the commutator subgroup of a group $G$ are denoted by $Z(G)$ and $G'$, respectively, and the commutator of two elements $g,h\in G$ is defined by
$[g,h]=g^{-1}h^{-1}gh.$ The set of all inequivalent irreducible complex ordinary representations of $G$ is denoted by $\mathrm{Irr}(G)$. For a normal subgroup $Z$  of $G$ and for $\chi \in \mathrm{Irr}(Z)$, we denote $\mathrm{Irr}(G\mid\chi)=\{\rho\in \mathrm{Irr}(G)\mid \langle \rho|_Z,\chi\rangle\neq 0\}$, the set of all inequivalent irreducible ordinary representations of $G$ lying above $\chi$. A group $G$ is called capable if there exists a group $H$ such that
$G\cong H/Z(H),$ and non-capable otherwise. The epicenter of $G$, denoted by $Z^*(G)$, is the smallest central subgroup of $G$ such that $G/Z^*(G)$ is capable. A group $G$, satisfying
  $Z^*(G)=Z(G)$ is called unicentral. The cohomology group $\mathrm H^{2}(G,\mathbb C^{\times})$ is the Schur multiplier of $G$, which we denote by $\M(G)$. Whenever presentations of $p$-groups are used, we follow the convention of \cite{james} and omit relations of the form $[g,h]=1$ among generators. We also adopt the notation and presentations from \cite{james} throughout the paper without further reference.

 
\section{Preliminaries}
In this section we recall the results that will be used to prove our main results.

 \begin{thm}[ \cite{karpischur}, Theorem 2.2.10]\label{Schur direct product} 
 	For two groups $G_1$ and $G_2$, 
 	$$\M(G_1\times G_2)\cong \M(G_1) \times \M(G_2)\times \Hom(G_1/G_1' \otimes_\mathbb Z G_2/G_2', \mathbb C^\times).$$
 \end{thm}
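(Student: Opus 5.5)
The statement just quoted is the classical Künneth-type formula for the Schur multiplier of a direct product (\cite[Theorem 2.2.10]{karpischur}). My plan is to prove it through the homological description $\M(G)\cong H_2(G,\bZ)$, which is Pontryagin dual to $\mathrm H^2(G,\bC^\times)$ for finite $G$ because $\bC^\times$ is divisible, and then dualize at the end.

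\textbf{Step 1: Künneth for homology.} Apply the Künneth formula for integral homology of $G_1\times G_2$ in degree $2$:
\[
H_2(G_1\times G_2,\bZ)\cong \bigoplus_{i+j=2} H_i(G_1)\otimes H_j(G_2)\;\oplus\;\bigoplus_{i+j=1}\mathrm{Tor}(H_i(G_1),H_j(G_2)).
\]
I then identify each term:
\begin{enumerate}[(a)]
\item Since $H_0=\bZ$, the tensor terms with one index $0$ give $H_2(G_1)\oplus H_2(G_2)$.
\item Since $H_1(G_i)=G_i/G_i'$, the middle tensor term is $G_1/G_1'\otimes G_2/G_2'$.
\item The Tor terms all involve $H_0=\bZ$, which is free, so they vanish.
\end{enumerate}
This gives $H_2(G_1\times G_2)\cong H_2(G_1)\oplus H_2(G_2)\oplus (G_1/G_1'\otimes G_2/G_2')$.

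\textbf{Step 2: Dualize.} Apply $\Hom(-,\bC^\times)$. The universal coefficient theorem gives $\mathrm H^2(G,\bC^\times)\cong \Hom(H_2(G),\bC^\times)$; the Ext term vanishes because $\bC^\times$ is divisible. Dualizing the decomposition from Step 1 yields exactly the stated formula, using that finite abelian groups are isomorphic to their duals to write $\M(G_i)$ in place of $\Hom(H_2(G_i),\bC^\times)$.

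\textbf{Alternative elementary route.} If one wants to avoid homology, take the inflation maps from $G_i$ and restriction to the factors. These split off $\M(G_1)\times\M(G_2)$. The complementary part consists of cocycle classes that restrict trivially to both factors. Each such class is sent to the bimultiplicative pairing $(g_1,g_2)\mapsto \alpha(g_1,g_2)\alpha(g_2,g_1)^{-1}$, which factors through $G_1/G_1'\times G_2/G_2'$. The bimultiplicativity and the well-definedness of this map are routine checks.

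\textbf{Main obstacle.} The hard step on this route is surjectivity and injectivity onto $\Hom(G_1/G_1'\otimes G_2/G_2',\bC^\times)$. For surjectivity, I would realize a given pairing $f$ by the cocycle
\[
\alpha((a,b),(a',b'))=f(b\otimes a'),
\]
where $b\in G_2$, $a'\in G_1$, and $f(b\otimes a')$ means $f$ applied to the images in the abelianizations. For injectivity, I would show that a class which restricts trivially to both factors and has trivial pairing is a coboundary.
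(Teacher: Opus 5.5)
The paper does not prove this statement; it quotes it from \cite{karpischur} and uses it as a black box to compute Schur multipliers of the direct-product groups in Section~\ref{Application_1}. Your argument is therefore a self-contained replacement rather than a variant of anything in the paper.

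Your homological route is complete and correct. In degree $2$ both $\mathrm{Tor}$ terms of the K\"unneth formula involve $H_0=\bZ$ and vanish. Divisibility of $\bC^\times$ then gives $\mathrm H^2(G,\bC^\times)\cong\Hom(H_2(G,\bZ),\bC^\times)$ for every group $G$. You do not need self-duality of finite abelian groups at the end: the same universal-coefficient isomorphism applied to $G_i$ already identifies $\Hom(H_2(G_i,\bZ),\bC^\times)$ with $\M(G_i)=\mathrm H^2(G_i,\bC^\times)$. Dropping that step makes the proof work for arbitrary groups, which matches the wording of the statement.

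Your cocycle route buys something the homological one does not. It gives explicit cocycles for the third factor and describes that factor intrinsically through the commutator pairing, which is the natural language for twisted group algebras. Your surjectivity cocycle is right; it realizes $f^{-1}$ rather than $f$, which is harmless.

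On that route, however, injectivity is only announced, and it is the one non-formal point. It has a short proof:
\begin{enumerate}[(i)]
\item Suppose $[\alpha]$ restricts trivially to $G_1$ and $G_2$. Then the central extension of $G_1\times G_2$ by $\bC^\times$ defined by $\alpha$ has homomorphic sections $s_i$ over the two factors.
\item The commutator $[s_1(g_1),s_2(g_2)]$ equals the pairing value, independent of lifts because the kernel is central. So a trivial pairing means $s_1(g_1)$ and $s_2(g_2)$ commute.
\item Hence $(g_1,g_2)\mapsto s_1(g_1)s_2(g_2)$ is a homomorphic section of the whole extension, and $[\alpha]=1$.
\end{enumerate}
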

Let $G$ be a finite group of nilpotency class $2$ with $G'$ and $G/G'$ being elementary abelian. Now we recall the facts from \cite{evens}, which are useful to compute $\M(G)$.
 View $G/G'$ and $G'$ as vector spaces over $\mathbb{F}_p$ and denote them by $V$ and $W$, respectively. Let $X_1$ be the subspace of $V \otimes W$ spanned by all 
 \[g_1G' \otimes[g_2,g_3] + g_2G' \otimes [g_3,g_1]+g_3G' \otimes [g_1,g_2],\]
 for $g_1,g_2,g_3 \in G$. Consider a map $f: V \to W$ defined by $f(gG')=g^p$ for $g \in G$. Let $X_2$ be the subspace spanned by all $gG' \otimes f(gG'),~gG' \in V$. Now consider the space $X:=X_1 +X_2.$
 \begin{thm}[\cite{schurrank2}, Lemma 2.4] \label{epicentre2}
 	Let $Z$ be a central subgroup of a finite group $G$ of nilpotency class $2$. Then $Z \subseteq Z^*(G)$ if and only if $(G/G') \otimes Z $ is contained in $X$.
 \end{thm}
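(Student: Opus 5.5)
The plan is to combine the characterisation of the epicenter through the Ganea exact sequence with Evens' description of the Schur multiplier of a class-$2$ group whose $G'$ and $G/G'$ are elementary abelian. As in the surrounding discussion, I work in that setting, with $V=G/G'$ and $W=G'$. Throughout I regard $Z$ as lying in $W=G'$. Otherwise the expression $(G/G')\otimes Z\subseteq X$ does not even make sense inside $V\otimes W$, so this is implicitly part of the hypothesis. Both directions of the equivalence will follow from the same identification.

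The first step is the standard criterion of Beyl, Felgner and Schmid. A central subgroup $Z$ satisfies $Z\subseteq Z^*(G)$ if and only if the inflation map $\M(G/Z)\to \M(G)$ is injective. Dually, in homological terms, this holds if and only if the map $H_2(G)\to H_2(G/Z)$ is injective.

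The second step uses Ganea's exact sequence for a central extension $Z\rightarrowtail G\twoheadrightarrow G/Z$:
\[
Z\otimes G^{ab}\xrightarrow{\ \gamma\ } H_2(G)\to H_2(G/Z)\to Z\cap G'\to 0 .
\]
Here $\gamma(z\otimes gG')$ is the class of the commutator $z\wedge g$ in the nonabelian exterior square $G\wedge G\supseteq H_2(G)$. From this sequence, $Z\subseteq Z^*(G)$ if and only if $\gamma=0$.

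The third and main step is to identify $\gamma$ using Evens' computation. Evens realises $H_2(G)$ through the map
\[
V\otimes W\to G\wedge G,\qquad gG'\otimes w\mapsto g\wedge w .
\]
This map lands in $H_2(G)$ because $W$ is central. I would show that its kernel is exactly $X=X_1+X_2$, so that $(V\otimes W)/X$ embeds in $H_2(G)$, with cokernel $\ker(\Lambda^2V\to W)$.

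To show $X$ lies in the kernel, two relations have to be checked:
\begin{itemize}
\item the Hall--Witt/Jacobi identity in $G\wedge G$, which kills the generators of $X_1$;
\item the relation $g\wedge g^p=(g\wedge g)^p=1$, which kills the generators of $X_2$.
\end{itemize}
Both checks should be routine commutator calculus in class $2$.

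The reverse inclusion is the real obstacle. I would try to prove it by constructing an explicit central extension of $G$, or equivalently a group $\widehat G$ with $\widehat G/C\cong G$. In $\widehat G$ the commutator $[\hat g,\hat w]$ should take values precisely in $(V\otimes W)/X$. One builds $\widehat G$ as a class-$3$ group generated by lifts of generators of $G$, imposing only the relations forced by $X_1$ and $X_2$. Alternatively, one can appeal to Evens' result directly, since it is available as a cited fact.

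Once this is established, $\gamma$ is the composite
\[
Z\otimes V\hookrightarrow W\otimes V\cong V\otimes W\to (V\otimes W)/X\hookrightarrow H_2(G).
\]
Hence $\gamma=0$ if and only if $V\otimes Z\subseteq X$. Combined with the first two steps, this gives the statement of Theorem~\ref{epicentre2}.
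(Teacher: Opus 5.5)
The paper gives no argument for this statement; it quotes it from \cite[Lemma~2.4]{schurrank2}. Your route is the natural one: the Beyl--Felgner--Schmid criterion in homological form, then Ganea's sequence, then Evens' description of $\M(G)$. Its main line is sound. The Ganea map on $Z\otimes V$ is the restriction of $\theta\colon V\otimes W\to H_2(G)$, $gG'\otimes w\mapsto g\wedge w$, so once you know $\ker\theta=X$ the equivalence is immediate. You are also right to read $Z\subseteq G'$, with $V$ and $W$ elementary abelian, as implicit in the statement. Two things need fixing.

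\textbf{The cohomological criterion is misstated.} Dualising injectivity of $H_2(G)\to H_2(G/Z)$ gives \emph{surjectivity} of $\mathrm{inf}\colon\M(G/Z)\to\M(G)$, not injectivity. For $Z\subseteq G'$ the transgression $\Hom(Z,\bC^\times)\to\M(G/Z)$ is injective, so inflation has kernel isomorphic to $\Hom(Z,\bC^\times)$ (compare Theorem~\ref{exactseq}). Your first sentence would therefore say that no nontrivial $Z\subseteq G'$ lies in $Z^*(G)$. You only use the homological version afterwards, so nothing downstream breaks, but that sentence is false as written.

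\textbf{The real content is $\ker\theta\subseteq X$, and you have not proved it.} Your class-$3$ group $\widehat G$ is only sketched, and proving that such a presentation is consistent is exactly Evens' theorem. So you are in effect citing \cite{evens}, and you should say precisely what you take from it. Unless you check that the embedding $(V\otimes W)/X\hookrightarrow\M(G)$ in Evens' sequence is the one induced by $\theta$, use only his order formula, as follows.
\begin{itemize}
\item Since $W$ is central, the kernel of $G\wedge G\to V\wedge V=\Lambda^2V$ is the subgroup generated by the elements $g\wedge w$, which are central.
\item The composite $G\wedge G\to\Lambda^2V\to W$ is the commutator map.
\end{itemize}
Together these give, with no input from Evens, an exact sequence
\[
0\to \mathrm{im}\,\theta\to H_2(G)\to \ker(\Lambda^2V\to W)\to 0 .
\]
Comparing with $|\M(G)|=|(V\otimes W)/X|\cdot|\ker(\Lambda^2V\to W)|$ gives $|\mathrm{im}\,\theta|=|(V\otimes W)/X|$. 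Combined with the easy inclusion $X\subseteq\ker\theta$, this forces $\ker\theta=X$.

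You should also record why $\theta$ is well defined and bilinear. This needs $G'\wedge Z(G)=1$, which follows from the crossed-module identity expressing $[a,b]\wedge c$ through $a\wedge b$ and its $c$-conjugate. The same identity, expanded in class $2$, gives the cyclic relation that kills $X_1$.
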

 Let $G$ be a group and $V$ be a vector space over  $\bC$. A mapping  $\rho:G\rightarrow{GL(V)}$ is called a projective representation of $G$ over $\bC$, if there exists a mapping $\alpha:G \times G \rightarrow \mathbb C^\times$ such that 
 \begin{enumerate}[(i)]
 	\item $\rho (x) \rho (y)=\alpha(x,y) \rho(xy)$ for all $x,y \in G$.
 	\item $\rho(1)=1_V.$
 \end{enumerate}
 We say that $\rho$ is an $\alpha$-representation of $G$, and $\mathrm{Irr}^\alpha(G)$ denotes the set of all linearly inequivalent irreducible $\alpha$-representations of $G$.
  We refer the reader to \cite[Chapter 3]{karpiprojective} for definitions and basic facts on projective representations of groups. 
Now, we recall  inflation and transgression maps, which are defined as follows.
 Let $Z$ be a normal subgroup of $G$. For any $\alpha\in Z^2(G/Z,\mathbb C^\times)$,
define $\beta:G\times G\to \mathbb C^\times$ by $\beta(x,y)=\alpha(xZ,yZ)$, $x,y \in G$.
Then $\beta\in Z^2(G,\mathbb C^\times)$ and the homomorphism $\inf:\M(G/Z)\to \M(G)$,
defined by  $\inf([\alpha])=[\beta]$, is called the inflation map.
Now consider a central extension
 $1\to Z\to G^*\xrightarrow{\,f\,}G\to 1$, 
 and let $\mu:G\to G^*$ be a section of $f$. For any $\chi\in\Hom(Z,\mathbb C^\times)$, define
$\alpha_\chi:G\times G\to \mathbb C^\times$
by $\alpha_\chi(x,y) =\chi\left(\mu(x)\mu(y)\mu(xy)^{-1}\right)$.
Then the homomorphism
$\tra:\Hom(Z,\mathbb C^\times)\to \M(G)$,
which maps $\chi$ to $[\alpha_\chi]$, is called the transgression map associated with the given central extension. 
  
 \begin{thm}[\cite{karpischur}, Theorem 2.5.10]\label{exactseq}
 	If $G$ is a finite group with epicenter $Z^*(G)$, then for every subgroup $Z \subseteq Z^*(G)\cap G'$ of $G$,  the following  sequence 
 	\[
 	1  \xrightarrow{}   \mathrm{Hom}(Z, \mathbb C^\times)  \xrightarrow{\mathrm{tra}}  \M(G/Z)   \xrightarrow{\mathrm{inf}} \M(G)  \xrightarrow{}  1 
 	\]
 	is exact.
 \end{thm}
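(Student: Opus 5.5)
The plan is to obtain the sequence by dualizing the homological Ganea exact sequence of the central extension $1\to Z\to G\to G/Z\to 1$, and then to use the two hypotheses $Z\subseteq G'$ and $Z\subseteq Z^*(G)$ to cut it down to a short exact sequence. For a central subgroup $Z$ of a finite group $G$, Ganea's extension of the five-term homology sequence reads
\[
G/G'\otimes_{\mathbb Z} Z \xrightarrow{\ \gamma\ } \mathrm H_2(G,\mathbb Z)\longrightarrow \mathrm H_2(G/Z,\mathbb Z)\longrightarrow Z\cap G'\longrightarrow 1 .
\]
Since $\mathbb C^\times$ is divisible, $\Hom(-,\mathbb C^\times)$ is exact on abelian groups. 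By the universal coefficient theorem, applied to the finite group $G$ and coefficients $\mathbb C^\times$ with trivial action, we have $\M(G)\cong \Hom(\mathrm H_2(G,\mathbb Z),\mathbb C^\times)$, and similarly for $G/Z$. Dualizing therefore gives an exact sequence
\[
1\to \Hom(Z\cap G',\mathbb C^\times)\to \M(G/Z)\to \M(G)\to \Hom(G/G'\otimes Z,\mathbb C^\times).
\]

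The first step is to identify the maps in this dual sequence with $\tra$ and $\inf$. For $\inf$ this follows from naturality of the universal coefficient isomorphism. For the first map I would compare it with the five-term (Hochschild--Serre) sequence
\[
\Hom(G,\mathbb C^\times)\to\Hom(Z,\mathbb C^\times)\xrightarrow{\tra}\M(G/Z)\xrightarrow{\inf}\M(G).
\]
In that sequence the transgression is exactly the cocycle construction $\chi\mapsto[\alpha_\chi]$ recalled above, computed with a section $\mu$. This comparison also gives exactness at $\M(G/Z)$ directly.

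Now impose the hypotheses. If $Z\subseteq G'$, then $Z\cap G'=Z$. Equivalently, every linear character of $G$ is trivial on $Z$, so the restriction map $\Hom(G,\mathbb C^\times)\to \Hom(Z,\mathbb C^\times)$ is trivial, and $\tra$ is injective. This gives exactness at the left end.

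For surjectivity of $\inf$, I would use the Beyl--Felgner--Schmid characterization of the epicenter. For central $Z$, one has $Z\subseteq Z^*(G)$ if and only if the Ganea map $\gamma$ is zero, equivalently $\mathrm H_2(G)\to \mathrm H_2(G/Z)$ is injective. Dually, this says that $\inf:\M(G/Z)\to\M(G)$ is surjective. I expect this equivalence to be the main obstacle, since it is the substantive input (it is what Theorem~\ref{epicentre2} encodes in the class-$2$ case). I would prove it as follows:
\begin{enumerate}[(a)]
\item Take a covering (representation) group $\tilde G\to G$ with kernel $M\subseteq Z(\tilde G)\cap\tilde G'$.
\item Let $\tilde Z$ be the preimage of $Z$ in $\tilde G$. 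The image of $\gamma$ corresponds to the commutator subgroup $[\tilde Z,\tilde G]\subseteq M$.
\item $Z^*(G)$ is the image of $Z(\tilde G)$ taken over all such central extensions, so $Z\subseteq Z^*(G)$ if and only if $\tilde Z$ is central in $\tilde G$, i.e. $[\tilde Z,\tilde G]=1$, i.e. $\gamma=0$.
\end{enumerate}
Combining the three exactness statements, at $\Hom(Z,\mathbb C^\times)$, at $\M(G/Z)$ and at $\M(G)$, yields the claimed short exact sequence.
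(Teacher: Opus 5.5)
Your proposal is correct and is essentially the standard argument behind this result; the paper does not prove it but simply quotes it from Karpilovsky. In that argument the five-term sequence gives injectivity of $\tra$ (as $Z\subseteq G'$ kills the restriction from $\Hom(G,\mathbb C^\times)$) and exactness at $\M(G/Z)$, and the dual of Ganea's sequence together with the Beyl--Felgner--Schmid criterion ($Z\subseteq Z^*(G)$ iff $\gamma=0$) gives surjectivity of $\inf$.

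One small point about your step (c). Only the implication $Z\subseteq Z^*(G)\Rightarrow[\tilde Z,\tilde G]=1$ is needed, and it is immediate: the image of $Z(\tilde G)$ in $G$ contains $Z^*(G)$, and $M\subseteq Z(\tilde G)$. The converse you also assert would need the extra fact that this image equals $Z^*(G)$ for a covering group. That fact is true, but your sketch does not justify it.
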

 \begin{thm} \cite[Lemma 4.2]{Pooja} \label{lifting}
 	Let $\tilde{G}_1$ and $\tilde{G}_2$ be representation groups of $G_1$ and $G_2$ respectively, such that they correspond to the central extensions $1 \to Z_i \to \tilde{G}_i \to G_i\to 1$.
Let $\sigma : \mathrm{Hom}(Z_1, \mathbb C^\times) \to \mathrm{Hom}(Z_2, \mathbb C^\times)$ be an isomorphism such that for every $\chi \in  \mathrm{Hom}(Z_1, \mathbb C^\times)$, $$\mathrm{Irr}(\tilde{G}_1 \mid \chi) \longleftrightarrow \mathrm{Irr}(\tilde{G}_2 \mid \sigma(\chi)).$$
 	Then $G_1 \sim_\mathbb C G_2$.
 \end{thm}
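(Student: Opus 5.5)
The plan is to reduce the statement to the Wedderburn description of twisted group algebras. I read the symbol $\longleftrightarrow$ as a bijection that preserves degrees. For $i=1,2$, let $\mathrm{tra}_i\colon \mathrm{Hom}(Z_i,\mathbb C^\times)\to \M(G_i)$ be the transgression map of the central extension $1\to Z_i\to \tilde G_i\to G_i\to 1$, defined using a section $\mu_i$. Since $\tilde G_i$ is a representation group, $Z_i\subseteq Z(\tilde G_i)\cap \tilde G_i'$ and $Z_i\cong \M(G_i)$. By the standard theory of covering groups (\cite[Chapter 2]{karpischur}), $\mathrm{tra}_i$ is then an isomorphism. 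I would define
\[
\psi=\mathrm{tra}_2\circ\sigma\circ\mathrm{tra}_1^{-1}\colon \M(G_1)\to \M(G_2),
\]
which is an isomorphism because it is a composite of isomorphisms. It remains to show $\mathbb C^{\alpha}G_1\cong\mathbb C^{\psi(\alpha)}G_2$ for every $[\alpha]\in\M(G_1)$. Cohomologous cocycles give isomorphic twisted group algebras via the rescaling $\bar g\mapsto c(g)\bar g$. So we may take $\alpha=\alpha_\chi$ for $\chi=\mathrm{tra}_1^{-1}([\alpha])$, and take $\alpha_{\sigma(\chi)}$ as the representative of $\psi([\alpha])$.

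The key step is a degree-preserving bijection $\mathrm{Irr}(\tilde G_i\mid\chi)\leftrightarrow \mathrm{Irr}^{\alpha_\chi}(G_i)$ for every $\chi\in\mathrm{Hom}(Z_i,\mathbb C^\times)$. Let $\rho\in\mathrm{Irr}(\tilde G_i\mid\chi)$. Since $Z_i$ is central, Schur's lemma gives $\rho(z)=\chi(z)I$ for $z\in Z_i$. Put $\rho'(g)=\rho(\mu_i(g))$, normalising $\mu_i(1)=1$. Then
\[
\rho'(x)\rho'(y)=\rho\bigl(\mu_i(x)\mu_i(y)\mu_i(xy)^{-1}\bigr)\rho'(xy)=\alpha_\chi(x,y)\rho'(xy),
\]
so $\rho'$ is an $\alpha_\chi$-representation. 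Conversely, given an $\alpha_\chi$-representation $\pi$, set $\rho(z\mu_i(g))=\chi(z)\pi(g)$; a direct computation shows this is an ordinary representation of $\tilde G_i$ lying over $\chi$. The two constructions are mutually inverse and keep the same representation space. Since $\rho(\tilde G_i)$ and $\rho'(G_i)$ differ only by scalars, they have the same invariant subspaces and the same intertwiners. Hence irreducibility and equivalence are preserved, and so are degrees.

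Finally, $\mathbb C^{\alpha}G$ is semisimple (Maschke's theorem in the twisted setting, \cite[Chapter 3]{karpiprojective}). Its simple modules are exactly the irreducible $\alpha$-representations, so
\[
\mathbb C^{\alpha_\chi}G_1\cong\bigoplus_{\pi\in\mathrm{Irr}^{\alpha_\chi}(G_1)}M_{\deg\pi}(\mathbb C).
\]
Combining the degree-preserving bijections $\mathrm{Irr}^{\alpha_\chi}(G_1)\leftrightarrow\mathrm{Irr}(\tilde G_1\mid\chi)\leftrightarrow\mathrm{Irr}(\tilde G_2\mid\sigma(\chi))\leftrightarrow\mathrm{Irr}^{\alpha_{\sigma(\chi)}}(G_2)$ shows that both algebras have the same Wedderburn decomposition. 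Therefore $\mathbb C^{\alpha}G_1\cong\mathbb C^{\psi(\alpha)}G_2$, which gives $G_1\sim_{\mathbb C}G_2$.

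The main point needing care is the second paragraph, namely that the correspondence respects irreducibility and equivalence. I would handle it by noting that the images of $\rho$ and $\rho'$ span the same matrix algebra. I would also confirm that the hypothesis $\longleftrightarrow$ is meant to preserve degrees, not merely cardinalities; otherwise the statement would need that strengthening.
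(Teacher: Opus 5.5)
Your proof is correct. The paper gives no proof of this statement; it imports it from \cite[Lemma 4.2]{Pooja}. Your argument uses the same ingredients the paper relies on elsewhere. You set $\psi=\mathrm{tra}_2\circ\sigma\circ\mathrm{tra}_1^{-1}$, and your section-based correspondence $\mathrm{Irr}(\tilde G_i\mid\chi)\leftrightarrow\mathrm{Irr}^{\mathrm{tra}_i(\chi)}(G_i)$ is the construction the paper carries out in the proof of Theorem~\ref{representation tra}(ii). You then apply the Wedderburn decomposition of $\mathbb C^{\alpha}G_i$, equivalently of the blocks of $\mathbb C\tilde G_i\cong\bigoplus_{[\alpha]}\mathbb C^{\alpha}G_i$.

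You read $\longleftrightarrow$ as a dimension-preserving bijection. That is exactly how the paper uses the result in the proof of Theorem~\ref{representation related}.

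The one step you left to a citation, that $\mathrm{tra}_i$ is an isomorphism, is quick. In the exact sequence $\mathrm{Hom}(\tilde G_i,\mathbb C^\times)\to\mathrm{Hom}(Z_i,\mathbb C^\times)\xrightarrow{\mathrm{tra}_i}\M(G_i)$, the restriction map is trivial because $Z_i\subseteq\tilde G_i'$. Hence $\mathrm{tra}_i$ is injective, and $|Z_i|=|\M(G_i)|$ gives bijectivity.
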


 We say an isomorphism ${\bar{\psi}}:\M (G_1/Z_1) \to \M(G_2/Z_2)$ is induced from an isomorphism $\psi:G_1/Z_1 \to G_2/Z_2$ if $\bar{\psi}$ is defined by  $\bar{\psi}([\alpha])=[\beta]$ such that $\beta(\bar{g}_2,\bar{g}_2')=\alpha(\psi^{-1}(\bar{g}_2),\psi^{-1}(\bar{g}_2'))$, for $\bar{g}_2=g_2Z_2,\bar{g}_2'={g}_2'Z_2 \in G_2/Z_2$.
Sometimes we also denote the inflation and transgression maps by $\inf_i$ and $\tra_i$, respectively ($i=1,2$), which will be clear from the context.
 \begin{thm}\label{firstthm} \cite[Corollary 1.3]{TGRIP}
 	 Let $G_1, G_2$ be groups and let $Z_j$ be a central subgroup of $G_j$ for $j=1,2$ such that the sequences 
 	\[
 	1  \xrightarrow{}   \mathrm{Hom}(Z_{j}, \mathbb C^\times)  \xrightarrow{\mathrm{tra_j}}  \M(G_j/Z_{j})   \xrightarrow{\mathrm{inf}_j}  \M(G_j)  \xrightarrow{}  1 
 	\]
 	are exact. Suppose
 	there are isomorphisms $i: Z_1\to Z_2$ and $\psi: G_1/Z_1 \to G_2/Z_2$ such that the following diagram is commutative for the induced isomorphisms $\bar{i}$ and $\bar{\psi}$. 
 		\begin{figure}[H] \label{diagram1}
 		\[
 		\xymatrix{ 
 			1 \ar[r] &  \mathrm{Hom}(Z_{1}, \mathbb C^\times) \ar[d]^{\bar{i}} \ar[r] ^{\mathrm{tra}_{1}} & \M(G_1/Z_{1})  \ar[d]^{\bar{\psi}} \\
 			1 \ar[r] &  \mathrm{Hom}(Z_{2}, \mathbb C^\times) \ar[r] ^{\mathrm{tra}_{2}} & \M(G_2/Z_{2})\\
 		}
 		\]
 	\end{figure}
	Then $G_1 \sim_\mathbb C G_2$.
 \end{thm}

  \section{Isoclinic groups}\label{Proof}
    In this section, we give proofs of Theorem \ref{TGRIP isoclinic}, Theorem \ref{representation related}, and Corollary \ref{class 2}. 
 Suppose $G_1$ and $G_2$ are groups of the same cardinality such that they are isoclinic via the central subgroups $(A, B)$ and via the isomorphisms $(\phi,\delta)$.
 Then it follows from \cite[Lemma 1.5]{Tappe} that,
 \begin{equation} \label{delta map}
 	\delta(A\cap G_1')= B\cap G_2'.
 \end{equation}
 Thus, the map
 \begin{equation} 
 	\tilde{\delta}:  \Hom (A\cap G_1', \bC^\times) \to \Hom(B\cap G_2',\bC^\times) \nonumber
 \end{equation}
 defined by $\tilde{\delta}(\chi)=\chi \circ (\delta^{-1} |_{B \cap {G_2}^{\prime}})$ for $\chi \in \Hom (A\cap G_1', \bC^\times)$, is an isomorphism.
 Now, every $\chi \in \Hom (A\cap G_1', \bC^\times)$ can be extended to a $\chi_1 \in \Hom (A, \bC^\times)$ in $|\frac{A}{A\cap G_1'}|$ many ways. Since $|\frac{A}{A\cap G_1'}|=|\frac{B}{B\cap G_2'}|$, we can define a bijective map
 \begin{equation} \label{eta map}
 	\eta:\mathrm{Hom}(A, \bC^\times) \to \mathrm{Hom}(B, \bC^\times)
 \end{equation}
 such that 
 $\eta(\chi_1)|_{B \cap G_2'}=\tilde{\delta}(\chi_1|_{A \cap G_1'}).$
 \begin{thm}\label{representation tra}
 	Let $G_1$ and $G_2$ be the groups of the same cardinality such that they are isoclinic  via the central subgroups $(A, B)$ and via the isomorphisms $(\phi,\delta)$. Then the following hold.
 	\begin{enumerate}[(i)]
 		\item	The following diagram is commutative for the induced map ${\bar{\phi}}$ of $\phi$.
 		\begin{figure}[H] \label{diagram1}
 			\[\xymatrix{
 				\mathrm{Hom}(A, \bC^\times) \ar[d] ^{\mathrm{\eta}}\ar[r] ^{\mathrm{tra}_1} & \M(G_1/A) \ar[d]^{\mathrm{\bar{\phi}}}\\
 				\mathrm{Hom}(B, \mathbb C^\times) \ar[r] ^{\mathrm{tra}_2} & \M(G_2/B),}
				\label{figure 1}
 			\] \caption{Diagram 1}
 		\end{figure}	
 		
 		
 		%
 		\item For $\chi \in \mathrm{Hom}(A, \bC^\times)$, there is a dimension preserving bijection between the sets 
 		$\mathrm{Irr}(G_1|\chi)$, $ \mathrm{Irr}(G_2|\eta(\chi))
 		$, $ \mathrm{Irr}^{\mathrm{tra_1}(\chi)}(G_1/A)$ and    $\mathrm{Irr}^{\mathrm{tra_2}(\eta(\chi))}(G_2/B)$.
 	\end{enumerate}
 	
 \end{thm}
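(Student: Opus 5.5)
For (i), I would compute both transgressions explicitly with compatible sections. Fix a section $\mu_1: G_1/A \to G_1$ and a section $\mu_2: G_2/B\to G_2$ with $\mu_2(\phi(x)) \in \phi(x)$. The associated factor sets are $f_1(x,y)=\mu_1(x)\mu_1(y)\mu_1(xy)^{-1}\in A$ and $f_2(u,v)=\mu_2(u)\mu_2(v)\mu_2(uv)^{-1}\in B$, giving $\mathrm{tra}_1(\chi)=[\chi\circ f_1]$ and $\bar\phi(\mathrm{tra}_1(\chi))=[(\chi\circ f_1)\circ(\phi^{-1}\times\phi^{-1})]$. Commutativity then amounts to showing that $\eta(\chi)\circ f_2\circ(\phi\times\phi)$ and $\chi\circ f_1$ are cohomologous cocycles on $G_1/A$.

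A cohomology class in $\M(G_1/A)$ of the form $[\chi\circ f_1]$ is governed by the commutator behaviour, since $\M$ of a finite group embeds into data on commuting pairs via the alternating form $\alpha(x,y)\alpha(y,x)^{-1}$ on abelian subgroups. For a finite group, a class is trivial if and only if its restriction to every abelian subgroup is trivial? That criterion is too strong in general, so I would argue differently.

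The cleaner route is to note that $\chi\circ f_1$ depends only on $\chi|_{A\cap G_1'}$ up to coboundary. Extending a character of $A/(A\cap G_1')$ gives a character of $G_1/G_1'$, hence of $G_1$. Multiplying $\chi$ by such a character $\lambda|_A$ changes $\chi\circ f_1$ by $\lambda(\mu_1(x))\lambda(\mu_1(y))\lambda(\mu_1(xy))^{-1}$, which is a coboundary. So the class $\mathrm{tra}_1(\chi)$ depends only on $\chi|_{A\cap G_1'}$, and by construction of $\eta$ it suffices to compare on $A\cap G_1'$. Here I would use the standard isoclinism trick: $G_1$ is a quotient of a free group $F$ on generators indexed by $G_1/A$, and similarly $G_2$, via the identification $\phi$. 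The relations $f_1(x,y)$ are images of words in $F$, and the part lying in $G_1'$ is expressed through commutators, which $\delta$ transports to the corresponding commutators in $G_2$. Concretely, choose a free presentation $R\to F\to G_1/A$. Isoclinism gives homomorphisms $F\to G_1$ and $F\to G_2$ lifting the sections, and $\delta$ intertwines the induced maps $R\cap F'\to A\cap G_1'$ and $R\cap F'\to B\cap G_2'$, since these are determined by commutator values. Via Hopf's description $\M(G_1/A)\cong (R\cap F')/[F,R]$, transgression is the dual of this map. I expect this identification of transgression with the Hopf-formula map, done carefully with $\eta$ only controlled on $A\cap G_1'$, to be the main technical step.

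For (ii), I would use the standard correspondence for a central extension $1\to A\to G_1\to G_1/A\to 1$. Irreducible representations $\rho$ of $G_1$ lying over $\chi$ correspond, dimension-preservingly, to irreducible $\alpha$-representations $x\mapsto \rho(\mu_1(x))$ of $G_1/A$ with $\alpha=\chi\circ f_1$. The same holds for $G_2$ and $\eta(\chi)$. Composing with $\phi^{-1}$ gives a dimension-preserving bijection $\mathrm{Irr}^{\alpha}(G_1/A)\to\mathrm{Irr}^{\bar\phi(\alpha)}(G_2/B)$. Part (i) shows that $\bar\phi(\mathrm{tra}_1(\chi))=\mathrm{tra}_2(\eta(\chi))$, and cohomologous cocycles have dimension-preservingly bijective sets of irreducible projective representations, obtained by scaling by the coboundary function. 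Chaining these four bijections gives (ii).
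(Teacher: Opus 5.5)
Your proposal is correct. For (ii) it is the paper's argument: the section correspondence $\mathrm{Irr}(G_1\mid\chi)\leftrightarrow\mathrm{Irr}^{\mathrm{tra}_1(\chi)}(G_1/A)$, followed by $\rho\mapsto\rho\circ\phi^{-1}$. Your extra rescaling step is genuinely needed. With compatible sections, $\chi\circ f_1\circ(\phi^{-1}\times\phi^{-1})$ and $\eta(\chi)\circ f_2$ are in general only cohomologous, because the values $f_1(x,y)$ need not lie in $G_1'$ and $\eta(\chi)$ is controlled only on $B\cap G_2'$. The paper's displayed computation tacitly treats them as equal cocycles.

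For (i) your route is different: the paper simply cites Tappe's Theorem 1.7, while you prove the commutativity directly, and the core of your argument is sound.
\begin{enumerate}
\item With $\theta_1(e_x)=\mu_1(x)$ and $\theta_2(e_x)=\mu_2(\phi(x))$ you have $\theta_2(u)\in\phi(\theta_1(u)A)$ for every $u\in F$. Hence $\delta\circ\theta_1=\theta_2$ on commutators, so on $F'\supseteq R\cap F'$.
\item Using the same presentation $(F,R)$ for $G_2/B$ via $\phi$ makes $\bar\phi$ the identity on $\mathrm{Hom}((R\cap F')/[F,R],\mathbb C^\times)$.
\item Then $\eta(\chi)\circ\theta_2=\chi\circ\theta_1$ on $R\cap F'$, because $\theta_1(R\cap F')\subseteq A\cap G_1'$.
\end{enumerate}
The step you flag as the main technical difficulty is a one-line check. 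The map $a\mu_1(x)\mapsto(\chi(a),x)$ is a homomorphism from $G_1$ to the extension of $G_1/A$ by $\mathbb C^\times$ defined by $\chi\circ f_1$. Composing it with $\theta_1$ gives a lift of $F\to G_1/A$ whose restriction to $R\cap F'$ is $\chi\circ\theta_1$.

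Your approach makes the proof self-contained; it is essentially the cohomological dual of Tappe's result, and needs only Hopf's formula. The paper's citation is shorter but leaves the mechanism opaque.

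Two tidying remarks. Your preliminary reduction to $\chi|_{A\cap G_1'}$ is correct, but it becomes redundant once you use the Hopf description. The abandoned aside in your second paragraph should be deleted: $\M(G)$ does not in general embed via alternating forms on commuting pairs. The kernel of restriction to abelian subgroups is the Bogomolov multiplier, which can be nontrivial.
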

 
 \begin{proof}
 	(i) Proof follows from \cite[Theorem 1.7]{Tappe}.
 	
 	(ii) Consider the natural exact sequence $1 \rightarrow A \rightarrow G_1 \rightarrow G_1/A \rightarrow1$ and
  $\mu:G_1/A\to G_1$ a  section. For $\chi \in \mathrm{Hom}(A, \bC^\times)$, define a map
 	\[\zeta:\mathrm{Irr}(G_1|\chi) \to \mathrm{Irr}^{\mathrm{tra_1}(\chi)}(G_1/A)\] 
 	by $\zeta(\gamma)= \rho$, where $\rho(\bar{g}):=\gamma(\mu(\bar{g}))$ for $\bar{g}=gA\in G_1/A$. Then $\rho$ is an irreducible $\tra_1(\chi)$-representation of $G_1/A$. Now take another map 
 	\[\xi:\mathrm{Irr}^{\mathrm{tra_1}(\chi)}(G_1/A) \to  \mathrm{Irr}(G_1|\chi)\]
 	defined by $\xi(\rho')=\gamma'$, where $\gamma'(a\mu(\bar{g})):=\chi(a)\rho'(\bar{g})$ for $ a \in A,\bar{g}=gA\in G_1/A$. Since every element $g \in G_1$ can be uniquely written as $a\mu(\bar{g})$, $\gamma'$ is well defined and it is an irreducible ordinary representation of $G_1$ lying above $\chi$. From the definitions of $\zeta$ and $\xi$, we have $\zeta \circ \xi=\xi \circ \zeta =\mathrm{id}$, this yields a bijection between $\mathrm{Irr}(G_1|\chi)$ and $ \mathrm{Irr}^{\mathrm{tra_1}(\chi)}(G_1/A)$.
 	Therefore, to prove the result,  it is enough to give a bijection between
	$\mathrm{Irr}^{\mathrm{tra_1}(\chi)}({G_1}/A)$ and $\mathrm{Irr}^{\mathrm{tra_2}(\eta(\chi))}({G_2}/B)$.		
 	Define a map 
 	\[\Psi:\mathrm{Irr}^{\mathrm{tra_1}(\chi)}(G_1/A) \to \mathrm{Irr}^{\mathrm{tra_2}(\eta(\chi))}({G_2}/B)\]
 	by $\Psi(\rho )=\rho \circ \phi^{-1}$.
 	Then for $\bar{h}=hB,\bar{h}'=h'B \in G_2/B$,
 	\begin{align*}
 		\Psi(\rho)(\bar{h})\Psi(\rho)(\bar{h}') &=	(\rho \circ \phi^{-1})(\bar{h}) (\rho \circ \phi^{-1})(\bar{h}')\\
 		& = \rho (\phi^{-1}(\bar{h}))\rho (\phi^{-1}(\bar{h}'))\\
 		& = \mathrm{tra_1}(\chi)\big(\phi^{-1}(\bar{h}),\phi^{-1}(\bar{h}'\big)\rho (\phi^{-1}(\bar{h})\phi^{-1}(\bar{h}'))\\
 		& = \mathrm{tra_2}(\eta(\chi))(\bar{h},\bar{h}') \Psi(\rho)(\bar{h}\bar{h}').
 	\end{align*}	
 	Therefore, $\Psi(\rho)$ is a $\mathrm{tra_2}(\eta(\chi))$-representation of ${G_2}/B.$  
 	Let $\rho$ and $\rho'$ be two linearly equivalent $\mathrm{tra}_1(\chi')$-representation of $G_1/A$. Then there exist an invertible linear transformation $T$ such that  $\rho(\bar{g})=T\rho'(\bar{g})T^{-1}$. This implies, $\rho (\phi^{-1}(\bar{h}))=T\rho' (\phi^{-1}(\bar{h}))T^{-1}$, for some $\bar{h} \in G_2/B$. Hence $\rho \circ \phi^{-1}$ and $\rho' \circ \phi^{-1}$ are also linearly equivalent $\mathrm{tra_2}(\eta(\chi'))$-representation of $G_2/B.$ So the map is well-defined.
 	Next, define another map 
 	\[\Omega:\mathrm{Irr}^{\mathrm{tra_2}(\eta(\chi))}({G_2}/B) \to \mathrm{Irr}^{\mathrm{tra_1}(\chi)}(G_1/A)\]
 	by $\Omega(\tilde{\rho} )=\tilde{\rho}  \circ \phi$. Then $\Psi \circ \Omega = \Omega \circ \Psi=\mathrm{id}.$ Hence the proof follows.
	
 \end{proof}

 	

 \noindent \textbf{Proof of Theorem \ref{TGRIP isoclinic}.}
 \begin{proof}
 	First note that $\bC G_1 \cong \bC G_2$, since $G_1$ and $G_2$ are isoclinic.
	Now  by Theorem \ref{exactseq} we have the following exact sequence.
 	\[
 	1  \xrightarrow{}   \mathrm{Hom}(A_i, \mathbb C^\times)  \xrightarrow{\mathrm{tra}_i}  \M(G_i/A_i)   \xrightarrow{\mathrm{inf}_i}  \M(G_i)  \xrightarrow{}  1 .
 	\]
If $G_1$ and $G_2$ are isoclinic via the map $(\phi,\delta)$, then by \eqref{delta map}, $\delta(A_1)=A_2$
 	and we have the induced isomorphism
 	$\bar{\delta}$ given by
 	\[\Hom(A_1,\bC^\times)\xrightarrow {\bar{\delta}}\Hom(A_2,\bC^\times)\]
 	\[\chi \mapsto \chi\circ\delta^{-1}.\] 
Since $\phi: G_1/A_1 \to G_2/A_2$ is an isomorphism, the induced map  $\bar{\phi}:\M(G_1/A_1)\to \M(G_2/A_2)$ is an isomorphism. Therefore,  taking $A=A_1$, $B=A_2$, and $\eta=\tilde{\delta}$, it follows from  Theorem \ref{representation tra}$(i)$ that  the Diagram 1 is commutative. Hence by Theorem \ref{firstthm}, $G_1 \sim_\bC G_2$.
\end{proof}
 


 In an isoclinism family, all the groups $G$ satisfying $Z(G) \subseteq G'$ are called stem groups and all the stem groups of a given isoclinism family have the same order \cite[p. 135]{hall}. The following corollary follows immediately from the above theorem, which answer the (TGRIP) for all the unicentral stem groups $G$ belonging to the same isoclinism family.

 \begin{corollary}\label{isoclinic1}
 	Let $G_1$ and $G_2$ be two isoclinic stem groups such that 
 	$Z^*(G_i)=Z(G_i)$. Then $G_1 \sim_\bC  G_2.$ 
 \end{corollary}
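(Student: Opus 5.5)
We deduce the corollary directly from Theorem \ref{TGRIP isoclinic}, taking $A_i = Z(G_i)$ for $i=1,2$. We must check three things: the hypotheses on $A_i$, that the two groups have the same order, and that they are non-capable.

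First, the conditions on $A_i$. Since $G_i$ is a stem group, $Z(G_i)\subseteq G_i'$. By assumption $Z^*(G_i)=Z(G_i)$. Together these give
\[
A_i = Z(G_i) = Z^*(G_i)\cap G_i' .
\]
By the convention following Definition \ref{iso2}, saying that $G_1$ and $G_2$ are isoclinic means they are isoclinic via $(Z(G_1),Z(G_2))=(A_1,A_2)$.

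Second, the orders. All stem groups in a fixed isoclinism family have the same order \cite[p. 135]{hall}, so $|G_1|=|G_2|$.

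Third, non-capability, which is the only step needing an argument. There are two cases.

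\emph{Case $Z(G_i)\neq 1$.} Here $Z^*(G_i)=Z(G_i)\neq 1$. If $G_i$ were capable, then by the minimality in the definition of the epicenter we would have $Z^*(G_i)=1$, a contradiction. So $G_i$ is non-capable.

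\emph{Case $Z(G_i)=1$.} Then $G_i$ may well be capable, so Theorem \ref{TGRIP isoclinic} cannot be quoted verbatim. However, its proof uses non-capability only to obtain the exact sequence of Theorem \ref{exactseq}, and that theorem requires only $A_i\subseteq Z^*(G_i)\cap G_i'$. So we rerun that proof with $A_i=Z(G_i)$. Theorem \ref{exactseq} gives the exact sequences. Equation \eqref{delta map} gives $\delta(A_1)=A_2$. Theorem \ref{representation tra}(i), with $\eta=\tilde\delta$, gives the commutative diagram. Theorem \ref{firstthm} then yields $G_1\sim_\bC G_2$.

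This covers the degenerate case. Moreover, if both centers are trivial, the isoclinism induces $G_1\cong G_1/Z(G_1)\cong G_2/Z(G_2)\cong G_2$, so the conclusion holds trivially.

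The only real obstacle is this bookkeeping about capability. Everything else is a direct substitution $A_i=Z(G_i)$ into Theorem \ref{TGRIP isoclinic}.
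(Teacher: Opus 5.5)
Your proof is correct and is the paper's argument: the corollary comes from taking $A_i=Z(G_i)=Z^*(G_i)\cap G_i'$ in Theorem~\ref{TGRIP isoclinic}. The paper only says the corollary ``follows immediately''; your checks of equal orders (via Hall) and of non-capability, including the centerless case where $G_1\cong G_2$ trivially, make explicit what the paper leaves implicit.
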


 \noindent \textbf{Proof of Theorem \ref{representation related}.}
 \begin{proof}
	Let $\tilde{G}_1$ and $\tilde{G}_2$ be representation groups of $G_1$ and $G_2$ respectively. Suppose $\tilde{G}_1$, $\tilde{G}_2$ are isoclinic via the map $(\phi,\delta)$.
 	We have a central extension 
 	$$1\rightarrow A_i \rightarrow \tilde{G}_i \rightarrow G_i \rightarrow 1$$ such that $A_i \cong \M(G_i)$ and $A_i \subseteq Z(\tilde{G}_i) \cap \tilde{G}_i^{\prime}$. In particular, since by \eqref{delta map}, $\delta(Z(\tilde{G}_1)\cap \tilde{G}_1')= Z(\tilde{G}_2)\cap \tilde{G}_2'$, we can take $A_2=\delta(A_1).$	
 	So the map $\tilde{\delta}:\mathrm{Hom}(A_1, \bC^\times) \to \mathrm{Hom}(A_2, \bC^\times)$ defined
 	by $\chi \to \chi \circ \delta^{-1}|_{A_2}$ is an isomorphism. Since $|\frac{Z(\tilde{G}_1)}{A_1}|=|\frac{Z(\tilde{G}_2)}{\delta(A_1)}|$, we can define a bijection $\eta:\mathrm{Hom}(Z(\tilde{G}_1), \bC^\times) \to \mathrm{Hom}(Z(\tilde{G}_2), \bC^\times)  $ such that $\eta(\chi')|_{A_2}=\tilde{\delta}(\chi'|_{A_1})$, for $\chi' \in \mathrm{Hom}(Z(\tilde{G}_1), \bC^\times)$
 	
 	
 	
 	
In view of Theorem \ref{lifting}, it is enough to prove that, for each $\chi \in \mathrm{Hom}(A_1, \bC^\times)$,
 	the following sets are in dimension preserving bijection 
 	\[\mathrm{Irr}(\tilde{G_1}|\chi) \longleftrightarrow \mathrm{Irr}(\tilde{G_2}|\tilde{\delta}(\chi)).\] 	
 	Now, we have	
 	\[\mathrm{Irr}(\tilde{G}_1|\chi)= \underset{\{\chi' \in \mathrm{Hom}(Z(\tilde{G}_1), \bC^\times) ~: ~\chi'|_{A_1}=\chi\}}\bigcup \mathrm{Irr}(\tilde{G}_1|\chi'),\]
 	\[\mathrm{Irr}(\tilde{G}_2|\tilde{\delta}(\chi))= \underset{\{\chi' \in \mathrm{Hom}(Z(\tilde{G}_1), \bC^\times)  ~: ~ \eta(\chi')|_{A_2}=\tilde{\delta}(\chi)\}}\bigcup \mathrm{Irr}(\tilde{G}_2|\eta(\chi')).\]
 	 	By Theorem \ref{representation tra}$(ii)$, there is a bijective correspondence between  the sets $\mathrm{Irr}(\tilde{G}_1|\chi')$ and $\mathrm{Irr}(\tilde{G}_2|\eta(\chi'))$. Moreover,  $$|\{\chi' \in \mathrm{Hom}(Z(\tilde{G}_1), \bC^\times)  ~: ~ \chi' |_{A_1}=\chi\}|=|\{\chi' \in \mathrm{Hom}(Z(\tilde{G}_1), \bC^\times)  ~: ~ \eta(\chi') |_{A_2}=\tilde{\delta}(\chi)\}|.$$ 
 	Hence, we get the required bijection $\mathrm{Irr}(\tilde{G}_1|\chi) \longleftrightarrow \mathrm{Irr}(\tilde{G}_2|\tilde{\delta}(\chi)).$
	
	\end{proof}
\begin{remark} \label{non-isoclinic rep-group}
	If $G_1$ and $G_2$ are two isoclinic groups of same order with isomorphic Schur multiplier, whose representation groups are not isoclinic, then $G_1$ and $G_2$ may or may not be related.
	\begin{enumerate}[(i)]
		\item It is easy to check that the groups $\Phi_{11}(21^4)a$ and $\Phi_6(21^4)d$ are not isoclinic, and they are the representation groups of $\Phi_2(211)a$ and $\Phi_2(211)c$ respectively. From \cite[Theorem 4.3]{Margolis}, $\Phi_2(211)a \sim_\bC \Phi_2(211)c$.
		\item The groups $\Phi_{23}(21^4)d$ and $\Phi_{(43,2r)}$(see \cite{p^6presentations} for notation) are not isoclinic, and they are the representation groups of $\Phi_6(2111)a$ and $\Phi_6(221)d_0$ respectively. But by \cite[Theorem 4.3]{TGRIP}, $\Phi_6(2111)a \nsim_\bC \Phi_6(221)d_0$.
	\end{enumerate}
\end{remark} 

 \textbf{Proof of Corollary  \ref{class 2}.}
 \begin{proof}
 Suppose $G_1$ and $G_2$ are isoclinic  via the isomorphisms $(\phi,\delta)$ and $G_1$ and $G_2$ are minimally generated by some sets $X$ and $Y$ respectively. Our first claim is that their is a bijection $\Phi:X \to Y$ such that the sets $W_{G_1}^X\longleftrightarrow W_{G_2}^{\Phi(X)}$ are in bijective correspondence.
 	Let $G_1/Z(G_1)=\langle\bar{x}_1,\bar{x}_2,\ldots,\bar{x}_m \rangle$ and $G_2/Z(G_2)=\langle\bar{x}'_1,\bar{x}'_2,\ldots,\bar{x}'_m \rangle$, where $\phi(\bar{x}_i)=\bar{x}'_i$. 
 	If $d(G_1/Z(G_1)) = d(G_1)$,  taking $X=\{x_1,x_2, \ldots,x_m\}$ and $Y=\{x_1',x_2', \ldots,x_m'\}$
 	we define a map $\Phi: X\to Y$ by $ \Phi(x_i)=x_i'$, then $W_{G_1}^X\longleftrightarrow W_{G_2}^{\Phi(X)}$. 
 	Otherwise, if $d(G_1/Z(G_1)) < d(G_1)$, then there is a minimal generating set
 	$X=\{x_1,x_2, \ldots,x_m,
	y_1,y_2,\ldots,y_r\}$ of $G$ such that $x_i \notin Z(G_1)$ for  $i=1,2,\ldots,m$ and $y_j \in Z(G_1)$ for  $j=1,2,\ldots,r.$  Since $d(G_1)=d(G_2)$,  $G_2$ is generated by a set $Y=\langle x'_1,x'_2, \ldots,x'_m,
	y'_1, y'_2,\ldots,y'_r \rangle$ such that $y_j' \in Z(G_2)$. 
 	By defining a map $\Phi: X\to Y$ by $ \Phi(x_i)=x_i', \Phi(y_j)=y_j'$, 
 	our claim is proved. \\

 	Now our aim is to construct a representation group  $\tilde{G}_i$ of $G_i$ ($i=1,2$)  such that  $\tilde{G}_1$ is isoclinic to $\tilde{G}_2$. 
 	Then the result will follow from Theorem \ref{representation related}.
 	We will do it by a finite number of steps. In the first step, we will construct a group $\tilde{G}_1^{(1)}$ which fits in the following exact sequence 
 	\[\xymatrix
 	{1 \ar[r] & \langle z_1 \rangle \ar[r] & \tilde{G}_1^{(1)} \ar[r] & G_1 \ar[r] & 1,}\]  
 	for some $\langle z_1 \rangle \subseteq \tilde{G}_1^{(1)'} \cap Z(\tilde{G}_1^{(1)}) $ of order $p$.
 	Since $W_{G_1}^X$ contains three types of elements $ (x_i,y_j), (y_i,y_j), (x_i,x_j)$,
 	we consider the following three cases here.\\

 \noindent	\textbf{Case(i):} When $(x_i,y_j) \in W_{G_1}^X,$ for some $i$ and $j$. Fix one such pair $(i,j)$.
 	Let $G_1$ have a presentation of the form $\langle X \mid R_1 \rangle$. 
	We define a group of nilpotency class 2, say $\tilde{G}_1^{(1)}=\langle X \mid R_1^{(1)} \rangle$, where
\[
R_1^{(1)}
=
\bigl(R_1\setminus\{z_1\}\bigr)
\cup
\{z_1^p\}
\cup
\{[z_1,x] \mid x\in X\}, \text{ taking } z_1=[x_i,y_j].
\]	
Note that $\tilde{G}_1^{(1)}/Z(\tilde{G}_1^{(1)}) \cong G_1/Z(G_1) \times \langle \bar{y}_j \rangle\cong G_1/Z(G_1) \times \bZ/p\bZ $.
 	Next, we construct a group $\tilde{G}_2^{(1)}$ such that
 	 the following sequence is exact.
 	\[\xymatrix
 	{1 \ar[r] & \langle z_2 \rangle \ar[r] & \tilde{G}_2^{(1)} \ar[r] & G_2 \ar[r] & 1},\]   for some $\langle z_2 \rangle \subseteq \tilde{G}_2^{(1)'} \cap Z(\tilde{G}_2^{(1)}) $ of order $p$, such that  
$\tilde{G}_2^{(1)}$ is isoclinic to $\tilde{G}_1^{(1)}$. 	
 	Observe that $(x_i',y_j') \in W_{G_2}^{\Phi(X)}$, and take $z_2=[x_i',y_j']$. 
 	Let $G_2=\langle Y \mid R_2 \rangle$. We define $\tilde{G}_2^{(1)}=\langle Y \mid R_2^{(1)} \rangle$, where
 	$$R_2^{(1)}=(R_2 \setminus \{z_2\}) \cup \{z_2^p\} \cup \{[z_2,y] \mid y \in Y\}.$$
 	We have $\tilde{G}_2^{(1)}/Z(\tilde{G}_2^{(1)}) =\langle \bar{x}'_1,\bar{x}'_2,\ldots,\bar{x}'_m\rangle \times \langle \bar{y}'_j \rangle\cong G_2/Z(G_2) \times \bZ/p\bZ $.	
 	As $\tilde{G}_i^{(1)}/Z(\tilde{G}_i^{(1)}) \cong G_i/Z(G_i) \times \bZ/p\bZ$, we define a map $$\phi^{(1)}:\tilde{G}_1^{(1)}/Z(\tilde{G}_1^{(1)}) \to \tilde{G}_2^{(1)}/Z(\tilde{G}_2^{(1)})$$ on the generators by 
 	$\phi^{(1)}(\bar{x_i})=\bar{x}'_i$ and $\phi^{(1)}(\bar{y}_j)=\bar{y}'_j$. It is easy to see that it induces an isomorphism as $\phi$ is an isomorphism.
 	Since $\tilde{G}_i^{(1)'}\cong G_i' \times  \bZ/p\bZ$, the map
 	$\delta^{(1)}:\tilde{G}_1^{(1)'} \to \tilde{G}_2^{(1)'}$ defined 
 	by $\delta^{(1)}([x_i,y_j])=[x'_i,y'_j]$ and
 	$\delta^{(1)}([x_t,x_s])=[x'_t,x'_s]$ for $t,s \in \{1,2,\cdots,m\}$, induces an isomorphism.
 	Therefore, $\tilde{G}_1^{(1)}$ and $\tilde{G}_2^{(1)}$ are isoclinic via the isomorphisms $(\phi^{(1)},\delta^{(1)})$.\\

 \noindent	\textbf{Case(ii):} Suppose $(y_i,y_j) \in W_{G_1}^X,$ for some $i$ and $j$. Fix one such pair $(i,j)$. Taking $z_1=[y_i,y_j]$, analogously, as discussed in case(i), we can construct the groups $\tilde{G}_1^{(1)}$ and $\tilde{G}_2^{(1)}$ such that $\tilde{G}_1^{(1)}$ and $\tilde{G}_2^{(1)}$ are isoclinic.\\
 	
 \noindent	\textbf{Case(iii):} If $(x_i,x_j) \in W_{G_1}^X,$ for some $i$ and $j$,. Fix one such pair $(i,j)$. Then taking $z_1=[x_i,x_j]$, similar way we can construct  $\tilde{G}_1^{(1)}$ and $\tilde{G}_2^{(1)}$ such that $\tilde{G}_1^{(1)}$ and $\tilde{G}_2^{(1)}$ are isoclinic.\\

If $k=1$ we are done, otherwise we proceed in the second step. Using the same process discussed above, we can construct the groups $\tilde{G}_i^{(2)}$ such that 
for some $\langle z_i' \rangle \subseteq \tilde{G}_i^{(2)'} \cap Z(\tilde{G}_i^{(2)})$ of order $p$, the sequences
 	\[\xymatrix
 	{1 \ar[r] & \langle z_i' \rangle \ar[r] & \tilde{G}_i^{(2)} \ar[r] & \tilde{G}_i^{(1)} \ar[r] & 1}\text{ and } 
	\xymatrix
 	{1 \ar[r] & \langle z_i,z_i' \rangle \ar[r] & \tilde{G}_i^{(2)} \ar[r] & G_i  \ar[r] & 1},\] 
 are exact, and $\tilde{G}_1^{(2)}$ is isoclinic to $\tilde{G}_2^{(2)}$.

Since $
|W_{G_1}^{X}| \ge k,
$
 after $k$ many steps we have the groups $\tilde{G}_1(=\tilde{G}_1^{(k)})$ and $\tilde{G}_2(=\tilde{G}_2^{(k)})$ such that $\tilde{G}_1$ and $\tilde{G}_2$ are isoclinic and for $A_i \cong   (\bZ/p\bZ)^k \subseteq \tilde{G_i}' \cap Z(\tilde{G_i})$, the sequences 
$\xymatrix
 	{1 \ar[r] & A_i \ar[r] & \tilde{G_i} \ar[r] & G_i \ar[r] & 1}$ are exact for $i=1,2$. 
 	Hence, the proof follows.
\end{proof} 
 
 \subsection{Applications}\label{Application_1}
In this section, we apply Theorem~\ref{TGRIP isoclinic},Theorem~\ref{representation related} and Corollary~\ref{class 2} to determine the $\mathbb{C}$-twist isomorphism classes of several families of finite groups, thereby obtaining solutions to (TGRIP) for these groups.

Throughout this section, we use the notation and terminology of \cite[Lemmas~2.1--2.3]{Rocco1991} and \cite[Lemma~9]{Blyth2009} in the computation of Schur multipliers.
 \begin{thm}\label{Schur Phi_4}
 	We have the following $\sim_\bC$-twist isomorphism classes.
\begin{enumerate}[(i)]
\item $\Phi_4(2211)a
\sim_{\mathbb C}
\Phi_4(2211)c
\sim_{\mathbb C}
\Phi_4(2211)d_r\ (r\neq\tfrac{1}{2}(p-1))
\sim_{\mathbb C}
\Phi_4(2211)e
\sim_{\mathbb C}
\Phi_4(2211)f_r \\
(r\neq 0)
\sim_{\mathbb C}
\Phi_4(2211)g
\sim_{\mathbb C}
\Phi_4(2211)h
\sim_{\mathbb C}
\Phi_4(2211)i$;

\item $\Phi_4(321)a
\sim_{\mathbb C}
\Phi_4(321)c$;

\item $\Phi_4(3111)a
\sim_{\mathbb C}
\Phi_4(3111)b
\sim_{\mathbb C}
\Phi_4(3111)c$;

\item $\Phi_4(21^4)a
\sim_{\mathbb C}
\Phi_4(21^4)b
\sim_{\mathbb C}
\Phi_4(21^4)c$.
\end{enumerate} 
\end{thm}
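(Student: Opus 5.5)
Within each of the four lists, all the groups lie in the isoclinism family $\Phi_4$ and have order $p^6$ (James' classification). So the plan is to check, list by list, the hypotheses of one of the three criteria of this section: Theorem~\ref{TGRIP isoclinic}, Theorem~\ref{representation related}, or Corollary~\ref{class 2}. First I fix the standard presentations from \cite{james}. For every group appearing I compute three things: the centre, the derived subgroup, and the Schur multiplier. The multiplier comes from the Rocco/Blyth machinery (\cite[Lemmas~2.1--2.3]{Rocco1991}, \cite[Lemma~9]{Blyth2009}). Alternatively, for the class $2$ groups with elementary abelian $G'$ and $G/G'$, I use the Evens description recalled before Theorem~\ref{epicentre2}. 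A necessary first step is to confirm that within each list the multipliers are isomorphic. I expect them to be elementary abelian of a common rank $k$.

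Corollary~\ref{class 2} applies whenever the groups in a list are of nilpotency class $2$ with $d(G_1)=d(G_2)$. This should cover the $(2211)$, $(21^4)$ and $(3111)$ lists, where $\Phi_4$ groups of order $p^6$ are direct-product-like extensions of $\Phi_4(1^5)$ by central cyclic pieces. For each such list I choose one representative $G_1$ and a minimal generating set $X=\{x_1,x_2,x_3,y_1,\dots,y_r\}$ adapted to $Z(G_1)$. I then count the commuting pairs in $W^X_{G_1}$. Since $G_1/Z(G_1)$ is elementary abelian of rank $3$ with only two independent commutators, the pairs $(x_i,y_j)$ and $(y_i,y_j)$ all commute. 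So $|W^X_{G_1}|\ge 3r+\binom r2$, and I expect this to be at least $k$. For the exceptional parameter values excluded in (i), such as $r=\tfrac12(p-1)$ for $d_r$ and $r=0$ for $f_r$, I expect the multiplier to jump. That explains why those values are left out, and it gives a check on the multiplier computations.

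For list (ii), the $\Phi_4(321)$ groups, and for any list where some element has order $p^2$ and obstructs the simple counting, I fall back on Theorem~\ref{TGRIP isoclinic}. This needs a central subgroup $A_i\subseteq Z^*(G_i)\cap G_i'$ with the groups isoclinic via $(A_1,A_2)$, and with $G_i$ non-capable. I would compute $Z^*(G_i)$ using Theorem~\ref{epicentre2} by checking whether $(G/G')\otimes Z\subseteq X$. I would then choose $A_i$ to be a common subgroup such as $G_i^{p}\cap G_i'$, and check directly from the presentations that the isomorphism $G_1/Z(G_1)\to G_2/Z(G_2)$ lifts to $G_1/A_1\to G_2/A_2$ while preserving commutators. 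If that fails, a third route is Theorem~\ref{representation related}: write down explicit representation groups, for instance inside $\Phi_{11}$ or $\Phi_{23}$ type families of order $p^{6+k}$, and check that they are isoclinic.

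The main obstacle is the Schur multiplier and epicentre computations for every member of each list. This is especially so for parametrised families and for (ii), where non-elementary-abelian parts of $G/G'$ make the Evens method unavailable. It is exactly these computations that decide which of the three criteria applies.
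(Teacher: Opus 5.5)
There is a genuine gap. Your plan goes wrong where you assign Corollary~\ref{class 2} to the $(21^4)$ and $(3111)$ lists, because hypothesis (iii) cannot hold there. If $X=\{g_1,\dots,g_d\}$ minimally generates a group $G$ in $\Phi_4$, then $G'$ has rank $2$ and is generated by the $[g_i,g_j]$, so $|W^X_G|\le\binom d2-2$. The $(3111)$ and $(321)$ groups have $d=3$, so $|W^X_G|\le 1$, whereas $k=3$, resp.\ $k=2$. The $(21^4)$ groups are $\Phi_4(2111)x\times\bZ/p\bZ$ with $d=4$, so $|W^X_G|\le 4<6=k$. Your bound $3r+\binom r2$ also omits the commuting pair $(\alpha_1,\alpha_2)$. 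That pair is exactly what lifts the count to $k=4$ in the $(2211)$ list, which is the only list where the paper uses the corollary. For $(21^4)$, your fallback to Theorem~\ref{TGRIP isoclinic} is what the paper does: it takes $A_i=G_i^p=Z^*(G_i)\subseteq G_i'$, with an isoclinism interchanging $\alpha_1\leftrightarrow\alpha_2$ and $\beta_1\leftrightarrow\beta_2$. You would still need to verify $Z^*(G_i)=G_i^p$ and write down that map.

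The substantive gap is in the $(321)$ and $(3111)$ lists, where Theorem~\ref{TGRIP isoclinic} does not help either. For the $(3111)$ groups the only non-trivial choice is $A_i=Z^*(G_i)\cap G_i'=\langle\beta_1\rangle$. Already $\Phi_4(3111)a/\langle\beta_1\rangle$, with centre $(\bZ/p\bZ)^3$, and $\Phi_4(3111)b/\langle\beta_1\rangle$, with centre $\bZ/p^2\bZ\times\bZ/p\bZ$, are not isomorphic. So everything rests on Theorem~\ref{representation related}, which you list only as an untried last resort, with no construction.

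The missing idea is to write down isoclinic covers explicitly, and for $(3111)$ they cannot have class $2$. A class-$2$ cover of such a $3$-generator group has elementary abelian derived subgroup generated by three commutators, which leaves room only for the one new commutator $[\alpha_1,\alpha_2]$. The paper takes $\tilde G_i=\langle\alpha,\alpha_1,\alpha_2,\beta_1,\dots,\beta_5\rangle$ with relations
$[\alpha_1,\alpha]=\beta_1$, $[\alpha_2,\alpha]=\beta_2$, $[\alpha_1,\alpha_2]=\beta_3$, $[\beta_2,\alpha_2]=\beta_4$, $[\beta_2,\alpha]=\beta_5$.
In addition, $\alpha^{p^2}$, $\alpha_1^{p^2}$, resp.\ $\alpha_2^{p^2}$ equals $\beta_1$, and the remaining generators have order $p$. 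Then $\langle\beta_3,\beta_4,\beta_5\rangle\subseteq Z(\tilde G_i)\cap\tilde G_i'$ has quotient $G_i$ and realises $\M(G_i)\cong(\bZ/p\bZ)^3$. The three $\tilde G_i$ are isoclinic, and Theorem~\ref{representation related} applies; $(321)$ is treated the same way. These covers have order $p^9$, resp.\ $p^8$, which is beyond James' tables, so your pointer to $\Phi_{11}$ or $\Phi_{23}$ gives no guidance.
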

\begin{proof}
We first determine the Schur multipliers of the groups listed in the statement.
For the groups that are direct products, the required Schur multipliers follow from Theorem~\ref{Schur direct product} together with \cite[Tables 1 and 2]{hatuip5}. For other groups $G$, we follow the arguments given in the proof of \cite[Lemma 5.7]{hatuip5} for computing the Schur multiplier of the group $\Phi_3(2111)d$.

Let $G=\Phi_4(2211)g$. By \cite[Proposition 2.11]{hatuip5}, the non-abelian exterior square $G\wedge G$ is generated by
\[
\left\{
\begin{array}{cl}
&[\alpha_1,\alpha_2^\phi],\;[\beta_1,\beta_2^\phi],\;[\alpha,\gamma^\phi],[\alpha_i,\beta_j^\phi], \text{ for } i,j\in\{1,2\},[\alpha_k,\alpha^\phi],\;[\alpha_k,\gamma^\phi], \text{ for } k\in\{1,2\},\\
&[\beta_m,\alpha^\phi],\;[\beta_m,\gamma^\phi], \text{ for } m\in\{1,2\}
\end{array}
\right\}.
\]
Using \cite[Lemma 2.9]{hatuip5}, the following relations hold in $G\wedge G$. For $x\in\{\alpha,\alpha_1,\alpha_2\}$,
\begin{align*}
[\beta_1,x^\phi]
&=[\alpha^p,x^\phi]
=[\alpha,x^\phi]^p,
&
[\beta_2,x^\phi]
&=[\gamma^p,x^\phi]
=[\gamma,x^\phi]^p,
\\
[\beta_1,\gamma^\phi]
&=[\alpha_1,\alpha,\gamma^\phi]
=1
=[\alpha_2,\alpha,\gamma^\phi]
=[\beta_2,\gamma^\phi],
&
[\beta_1,\beta_2^\phi]
&=[\alpha_1,\alpha,\beta_2^\phi]
=1,
\\
[\alpha_1,\alpha^\phi]^p
&=[\alpha_1^p,\alpha^\phi]
=1
=[\alpha_2^p,\alpha^\phi]
=[\alpha_2,\alpha^\phi]^p,
&
[\alpha_1,\alpha_2^\phi]^p
&=[\alpha_1^p,\alpha_2^\phi]
=1,
\\
[\alpha_1,\gamma^\phi]^p
&=[\alpha_1^p,\gamma^\phi]
=1
=[\alpha_2^p,\gamma^\phi]
=[\alpha_2,\gamma^\phi]^p,
&
[\alpha,\gamma^\phi]^p
&=[\alpha^p,\gamma^\phi]
=[\beta_1,\gamma^\phi]
=1.
\end{align*}

Therefore,
$
G\wedge G=
\langle
[\alpha_1,\alpha^\phi],
[\alpha_2,\alpha^\phi],
[\alpha_1,\alpha_2^\phi],
[\alpha_1,\gamma^\phi],
[\alpha_2,\gamma^\phi],
[\alpha,\gamma^\phi]
\rangle.
$
Next consider the quotient group
$A=G/\langle\beta_1\rangle\cong\Phi_2(2111)b$.
By \cite[Table 2]{hatuip5},
$\M(A)\cong(\bZ/p\bZ)^5$.
Hence, by \cite[Theorem~3.1]{jones1973},
$|\M(G)|\ge p^4$, which implies $|G\wedge G|\ge p^6$.
Therefore, the above generating set is minimal and $
G\wedge G\cong(\bZ/p\bZ)^6$ by \cite[lemma 2.12]{hatuip5}.
Consequently,
$$
\M(G)=\langle
[\alpha_1,\alpha_2^\phi],
[\alpha_1,\gamma^\phi],
[\alpha_2,\gamma^\phi],
[\alpha,\gamma^\phi]
\rangle
\cong(\bZ/p\bZ)^4.
$$

The remaining Schur multipliers are computed by analogous arguments and are given as follows.
\begin{enumerate}[(a)]
\item $\M(G)\cong(\bZ/p\bZ)^4$ if $G$ is isomorphic to any of the groups $\Phi_4(2211)a$, $\Phi_4(2211)c$, $\Phi_4(2211)d_r$ $(r\neq\tfrac{1}{2}(p-1))$, $\Phi_4(2211)e$, $\Phi_4(2211)f_r~(r \neq 0)$, $\Phi_4(2211)g$, $\Phi_4(2211)h$, or $\Phi_4(2211)i$.

\item $\M(G)\cong\bZ/p\bZ\times\bZ/p\bZ$ if $G$ is isomorphic to either $\Phi_4(321)a$ or $\Phi_4(321)c$.

\item $\M(G)\cong(\bZ/p\bZ)^3$ if $G$ is isomorphic to any of the groups $\Phi_4(3111)a$, $\Phi_4(3111)b$, or $\Phi_4(3111)c$.

\item $\M(G)\cong(\bZ/p\bZ)^6$ if $G$ is isomorphic to any of the groups $\Phi_4(21^4)a$, $\Phi_4(21^4)b$, or $\Phi_4(21^4)c$.
\end{enumerate}

We now determine the $\bC$-twist isomorphism classes.

\smallskip

\noindent
(i) Since all the groups listed in (i) are nilpotent of class $2$, the result follows immediately from Corollary~\ref{class 2}.

\smallskip

\noindent
(ii) Let
$G_1=\Phi_4(3111)a$,
$G_2=\Phi_4(3111)b$, and
$G_3=\Phi_4(3111)c$.
Consider the groups
\begin{align*}
	\tilde{G}_i  = &
	\langle
	\alpha,\alpha_1,\alpha_2,\beta_j, ~ 1\le j\le5
	\mid
	[\alpha_1,\alpha]=\beta_1,
	[\alpha_2,\alpha]=\beta_2,
	[\alpha_1,\alpha_2]=\beta_3,
	[\beta_2,\alpha_2]=\beta_4,\\
	& 
	[\beta_2,\alpha]=\beta_5,
	R_i
	\rangle,
\end{align*}
where
\begin{align*}
R_1:\;&
\alpha^{p^2}=\beta_1,~~~~~
\alpha_1^p=\alpha_2^p=\beta_j^p=1,~~ 1\le j\le5,\\
R_2:\;&
\alpha_1^{p^2}=\beta_1,~~~~
\alpha^p=\alpha_2^p=\beta_j^p=1,~~ 1\le j\le5,\\
R_3:\;&
\alpha_2^{p^2}=\beta_1,~~~
\alpha^p=\alpha_1^p=\beta_j^p=1,~~1\le j\le5.
\end{align*}
 Note that
 $$
\tilde{G}_1
\cong
\left(
\langle
\alpha,\alpha_1,\beta_1,\beta_2,\beta_5 \rangle \times\langle\beta_3\rangle\times\langle\beta_4\rangle
\right)\rtimes\langle\alpha_2\rangle
, ~
\tilde{G}_2
\cong
\left(
\langle
\alpha,\alpha_1,\beta_1,\beta_2,\beta_5\rangle\times\langle\beta_3\rangle\times\langle\beta_4\rangle
\right)\rtimes\langle\alpha_2\rangle,
$$ and $
\tilde{G}_3
\cong
\left(
\langle
\alpha,\alpha_2,\beta_2,\beta_4,\beta_5\rangle \times\langle\beta_3\rangle
\right)\rtimes\langle\alpha_1\rangle.
$
Since 
$\langle\beta_3,\beta_4,\beta_5\rangle
\subseteq
Z(\tilde{G}_i)\cap\tilde{G}_i'$
such that 
$\tilde{G}_i/
\langle\beta_3,\beta_4,\beta_5\rangle
\cong
G_i$, each $\tilde{G}_i$ is a representation group of $G_i$.
Moreover, it is easy to check that
$\tilde{G}_i$
are isoclinic to each other.
Hence, by Theorem~\ref{representation related},
$
G_1
\sim_{\bC}
G_2
\sim_{\bC}
G_3.
$

\smallskip

\noindent
(iii) The proof is analogous to that of (ii).

\smallskip

\noindent
(iv) Let $G_1=\Phi_4(21^4)a=\langle \alpha,\alpha_i,\beta_i,i=1,2 \mid [\alpha_i,\alpha]=\beta_i,\alpha^p=\beta_2,\alpha_i^p=\beta_i^p=1 \times \langle \alpha_3 \rangle$
and $$G_2=\Phi_4(21^4)b=\langle \alpha',\alpha_i',\beta_i',i=1,2 \mid [\alpha_i',\alpha']=\beta_i',\alpha_1'^p=\beta_1',\alpha'^p=\alpha_2'^p=\beta_i'^p=1 \times \langle \alpha_3' \rangle.$$
 Then one can easily check that $Z^*(G_i)=G_i^p$. Furthermore, $G_1$ and $G_2$ are isoclinic via the central subgroups $(G_1^p,G_2^p)$ and via the isomorphism $(\phi,\delta)$, where $\phi:G_1/G_1^p \to G_2/G_2^p$ is defined on the generators by $\phi(\alpha)=\alpha',\phi(\alpha_1)=\alpha_2'$ and $\phi(\alpha_2)=\alpha_1'$ and, $\delta:G_1' \to G_2'$ is defined by $\delta(\beta_1)=\beta_2'$ and $\delta(\beta_2)=\beta_1'$. Hence by Theorem~\ref{TGRIP isoclinic}, $G_1\sim_{\bC}G_2$. A similar argument shows that $\Phi_4(21^4)b \sim_{\bC} \Phi_4(21^4)c$. 

This completes the proof.
\end{proof}

In the next result, we present another family of groups. The Schur multipliers of the groups listed below are obtained either from Theorem~\ref{Schur direct product} together with \cite[Tables 1 and 2]{hatuip5}, or by arguments analogous to those used in the proof of Theorem~\ref{Schur Phi_4} for the group $\Phi_4(2211)g$. The desired $\mathbb{C}$-twist isomorphism classes then follow immediately from Corollary~\ref{class 2} by comparing their Schur multipliers.
  \begin{thm}
 	We have the following $\sim_\bC$-twist isomorphism classes.
 	\begin{enumerate} [(i)]
 		\item $\Phi_2(411)a\sim_{\mathbb C}\Phi_2(321)e \sim_\bC\Phi_2(411)b;$
 		\item $\Phi_2(3111)a \sim_\bC \Phi_2(3111)b \sim_\bC \Phi_2(2211)b \sim_\bC \Phi_2(2211)e;$
 		\item $\Phi_2(21^4)a \sim_\bC \Phi_2(21^4)b$.
 	\end{enumerate}
	 \end{thm}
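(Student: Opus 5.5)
The plan follows the route the paper itself indicates just before the statement: compute Schur multipliers, check isoclinism and the hypotheses of Corollary~\ref{class 2}, and then apply the corollary to each pair. All groups in (i)--(iii) have order $p^6$. In James' notation, each family $\Phi_2(\cdot)$ is isoclinic to $\Phi_2(111)$, so any two groups in the same item are isoclinic. The quotient $G/Z(G)$ is elementary abelian of order $p^2$, and $G'$ is cyclic of order $p$. In particular, every group listed is nilpotent of class $2$, so hypothesis (i) of Corollary~\ref{class 2} holds automatically.

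\emph{Step 1: Schur multipliers.} For the groups that decompose as direct products, I will compute $\M(G)$ with Theorem~\ref{Schur direct product}. Examples include $\Phi_2(21^4)a=\Phi_2(211)a\times C_p\times C_p$ type and $\Phi_2(3111)a=\Phi_2(311)a\times C_p$. For this I will read $\M$ of the smaller factors from \cite[Tables 1 and 2]{hatuip5} and add the $\Hom(G_1/G_1'\otimes G_2/G_2',\bC^\times)$ term. For the indecomposable groups, such as $\Phi_2(321)e$, $\Phi_2(2211)e$, $\Phi_2(411)b$ and $\Phi_2(21^4)b$, I will repeat the exterior-square argument used for $\Phi_4(2211)g$:
\begin{itemize}
\item list generators $[x,y^\phi]$ of $G\wedge G$;
\item kill generators using $[x^p,y^\phi]=[x,y^\phi]^p$ together with the power relations;
\item get a lower bound from a quotient $G/\langle z\rangle$ via \cite[Theorem 3.1]{jones1973};
\item take $\M(G)$ to be the kernel of $G\wedge G\to G'$.
\end{itemize}
The goal is to show that within each item the multipliers coincide and are elementary abelian, $(\bZ/p\bZ)^k$. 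This is hypothesis (ii).

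\emph{Step 2: the counting condition.} For each item I will fix a minimal generating set $X$ of one group, say $G_1$, consisting of the two noncentral generators $\alpha,\alpha_1$ with $[\alpha_1,\alpha]=\beta$, together with central generators $y_j$. Every pair other than $(\alpha,\alpha_1)$ commutes. So $|W^X_{G_1}|=\binom{d}{2}-1$, where $d=d(G_1)$. I will then check that this is at least $k$, and also that $d(G_1)=d(G_2)$. Both can be read off the presentations, since the relevant groups have equal rank of $G/\Phi(G)$. Corollary~\ref{class 2} then gives each chain of relations $\sim_\bC$, using transitivity of $\sim_\bC$.

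\emph{Main obstacle.} The hardest step is Step 1 for the indecomposable groups. Equality of multiplier \emph{orders} across an item is not enough: the corollary needs the multipliers to be elementary abelian and isomorphic, and the $p$-power relations (e.g.\ $\alpha^{p^2}=\beta$ in the $411$ groups) could in principle create a $\bZ/p^2$ summand. I would first try to show that each generator $[x,y^\phi]$ of $\M(G)$ has order $p$. One source is relations of the form $[x^p,y^\phi]=[x,y^\phi]^p$ with $x^p$ central and outside $G'$. Another is bilinearity against the elements of order $p$. The matching lower bound would come from the Jones inequality applied to a quotient by $\langle\beta\rangle$ or by a central $p$-th power. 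If a $\bZ/p^2$ summand did appear, Corollary~\ref{class 2} would not apply, and I would fall back on building isoclinic representation groups directly and invoking Theorem~\ref{representation related}, as in part (ii) of Theorem~\ref{Schur Phi_4}.
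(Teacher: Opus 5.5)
Your proposal follows the same route as the paper. You compute the Schur multipliers, via Theorem~\ref{Schur direct product} and \cite{hatuip5} for the direct products and via the exterior-square argument used for $\Phi_4(2211)g$ otherwise, check that they are elementary abelian and agree within each item, and then apply Corollary~\ref{class 2}, with your count $|W_{G_1}^X|=\binom{d}{2}-1$ (valid because $\Phi(G)\subseteq Z(G)$ and $G/Z(G)\cong(\bZ/p\bZ)^2$) supplying condition (iii). The paper invokes only Corollary~\ref{class 2} here and never uses your fallback to Theorem~\ref{representation related}.
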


 \section{Special $p$-groups} \label{Special p-groups}	
 This section deals with the isoclinic non-capable special $p$-groups.
 We present a   proof of Theorem \ref{TGRIP special p group} here.
Our first result provides a solution of (TGRIP) for isoclinic unicentral special $p$-groups.
 \begin{thm} \label{TGRIP unicentral}
 	Let $G_1$ and $G_2$ be two isoclinic special $p$-groups which are unicentral. Then $G_1 \sim_\mathbb{C} G_2.$ 
 \end{thm}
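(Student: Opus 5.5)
The plan is to reduce Theorem~\ref{TGRIP unicentral} to Theorem~\ref{TGRIP isoclinic}, taking the central subgroups to be the full centers.

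For a special $p$-group we have $Z(G_i)=G_i'$. Unicentrality gives $Z^*(G_i)=Z(G_i)$. Hence
\[
Z^*(G_i)\cap G_i' = Z(G_i)=G_i',
\]
and we set $A_i:=Z(G_i)$. Then $A_i\subseteq Z^*(G_i)\cap G_i'$ holds trivially. By the convention in Definition~\ref{iso2}, the assumption that $G_1$ and $G_2$ are isoclinic means precisely that they are isoclinic via $(Z(G_1),Z(G_2))=(A_1,A_2)$.

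Two hypotheses of Theorem~\ref{TGRIP isoclinic} remain: the groups must have the same order, and they must be non-capable.

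\emph{Same order.} Isoclinism gives $G_1'\cong G_2'$ via $\delta$ and $G_1/Z(G_1)\cong G_2/Z(G_2)$ via $\phi$. Since $Z(G_i)=G_i'$, we get
\[
|G_i| = |G_i/Z(G_i)|\cdot|G_i'|,
\]
so $|G_1|=|G_2|$. Alternatively, special $p$-groups satisfy $Z(G)\subseteq G'$, so they are stem groups, and stem groups in one isoclinism family have equal order. Either way, Corollary~\ref{isoclinic1} nearly applies directly.

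\emph{Non-capability.} I expect this to be the main subtle point. Here I would argue as follows. If $G_i$ were capable, then $Z^*(G_i)=1$. Unicentrality would then force $Z(G_i)=1$, which is impossible because $G_i$ is a nontrivial $p$-group. Hence $G_i$ is non-capable. This also handles the triviality concern in Theorem~\ref{exactseq}.

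With all hypotheses verified, Theorem~\ref{TGRIP isoclinic} yields $G_1\sim_\mathbb{C} G_2$. Equivalently, one can quote Corollary~\ref{isoclinic1} directly, since special $p$-groups are stem groups.
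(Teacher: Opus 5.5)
Your proposal is correct and follows the paper's route. The paper simply cites Corollary~\ref{isoclinic1}, which is Theorem~\ref{TGRIP isoclinic} applied with $A_i=Z(G_i)=Z^*(G_i)=G_i'$. Your explicit checks of equal order and non-capability (using that $G$ is capable if and only if $Z^*(G)=1$) fill in details that the paper leaves implicit.
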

 
 \begin{proof}
 		Proof follows from Corollary \ref{isoclinic1}.
 \end{proof}
 However the following example says that two non-isoclinic special $p$-groups of same rank which are unicentral, may not be $\mathbb C$-twist isomorphic.
  \begin{example}
Let $G_1=\Phi_{12}(2211)a$ and $G_2=\Phi_{13}(2211)a$. Then $G_1\nsim_\mathbb C G_2$ since $\bC G_1\ncong \bC G_2$.
 \end{example} 

 \begin{thm} \label{TGRIP rank 2}
 	Let $G_1$ and $G_2$ be two non-capable special p-groups of rank $2$ such that the following properties hold:
 	\begin{enumerate}[(i)]
 		\item $\M(G_1) \cong \M(G_2)$;
 		\item $G_1$ and $G_2$ are isoclinic.
 	\end{enumerate} 	
 	Then, $G_1 \sim_\mathbb{C} G_2.$
 \end{thm}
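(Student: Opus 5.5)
The natural route is Theorem~\ref{TGRIP isoclinic}. For that I need central subgroups $A_i \subseteq Z^*(G_i)\cap G_i'$ with $G_1$ and $G_2$ isoclinic via $(A_1,A_2)$. Since $G_i$ is special, $Z(G_i)=G_i'\cong(\bZ/p\bZ)^2$. Because $G_i$ is non-capable, $Z^*(G_i)\neq 1$, so $Z^*(G_i)$ has order $p$ or $p^2$. If $Z^*(G_i)=Z(G_i)$, Corollary~\ref{isoclinic1} (or Theorem~\ref{TGRIP unicentral}) finishes immediately, since special groups are stem groups. So the plan splits according to $|Z^*(G_i)|$.

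\textbf{Step 1: matching epicenters.} I first show that the isoclinism $(\phi,\delta)$ between $G_1$ and $G_2$ can be chosen so that $\delta(Z^*(G_1))=Z^*(G_2)$. The tool is Theorem~\ref{epicentre2}. A central $Z$ lies in $Z^*(G)$ iff $V\otimes Z\subseteq X=X_1+X_2$, where $V=G/G'$ and $W=G'$.

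The subspace $X_1$ depends only on the commutator map $V\times V\to W$, which is an isoclinism invariant: $\phi$ induces $V_1\cong V_2$ and $\delta$ gives $W_1\cong W_2$, intertwining $X_1$. The obstruction is $X_2$, which depends on the power map $f$, and $f$ is not an isoclinism invariant.

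This is where hypothesis (i) enters. The count $|\M(G)|=p^{\dim(\Lambda^2V)+\dim W\cdot\dim V}/|X|$ from the Evens description gives $\dim X$ in terms of $|\M(G_i)|$. Hence $\dim X^{(1)}=\dim X^{(2)}$, so $\dim(X_1+X_2)$ is equal for both groups.

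\textbf{Step 2: deciding the order of the epicenter.} I then argue that $|Z^*(G_i)|$ is determined by $\dim X$ together with the isoclinism data. I would show that $Z^*(G_i)$ is unicentral (order $p^2$) exactly when $V\otimes W\subseteq X$. Otherwise $Z^*(G_i)$ is the unique line $\ell$ with $V\otimes\ell\subseteq X$; uniqueness holds because two such lines span $W$.

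With $\dim W=2$, the space $X/X_1$ is small. I would analyse the possible positions of $X$ relative to the decomposition $V\otimes W=V\otimes\ell\oplus V\otimes\ell'$ and aim to show the following. If $Z^*(G_1)$ has order $p$, then so does $Z^*(G_2)$. Moreover, after composing $\delta$ with a suitable automorphism compatible with $\phi$, one can arrange $\delta(Z^*(G_1))=Z^*(G_2)$.

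\textbf{Step 3: concluding.} With $A_i=Z^*(G_i)\subseteq Z(G_i)=G_i'$ and $\delta(A_1)=A_2$, I show that $G_1,G_2$ are isoclinic via $(A_1,A_2)$. Take $\phi'\colon G_1/A_1\to G_2/A_2$ lifting $\phi$. Such a lift exists because both $G_i/A_i$ are extensions of $G_i/Z(G_i)$ by $Z(G_i)/A_i\cong\bZ/p\bZ$, and the commutator structure is preserved by $\delta$. Applying Theorem~\ref{TGRIP isoclinic} then gives $G_1\sim_{\bC}G_2$.

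If constructing $\phi'$ fails, I would instead verify the commutative diagram of Theorem~\ref{firstthm} directly. There $Z_i=A_i$, the exact sequences come from Theorem~\ref{exactseq}, and the transgression is computed from commutators via Theorem~\ref{representation tra}(i).

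\textbf{Main obstacle.} I expect the main difficulty to be Step 2. Showing that the isoclinism can be adjusted so that $\delta$ carries $Z^*(G_1)$ onto $Z^*(G_2)$ requires controlling how the power-map part $X_2$ sits. The equal Schur multipliers only match dimensions, so a genuine rank-$2$ case analysis of the possible $X$ is needed. My first attempt would use $\dim W=2$ to reduce to comparing which line of $W$ absorbs $V\otimes\ell$.
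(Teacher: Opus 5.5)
Your overall architecture matches the paper: settle the unicentral case by Corollary~\ref{isoclinic1}, and otherwise find an isoclinism via $(Z^*(G_1),Z^*(G_2))$ and apply Theorem~\ref{TGRIP isoclinic}. Step 1 is also fine. Equal multipliers give equal $\dim X$; the correct count is $|\M(G)|=p^{\binom d2-2+2d-\dim X}$, so you dropped a factor $|W|$, but that is harmless. Since $G_i$ is unicentral iff $X=V\otimes W$, either both groups are unicentral or neither is.

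\textbf{The gap is Step 2.} That step is the whole content of the theorem, and you only state what you ``aim to show''. Equality of $\dim X$ says nothing about \emph{which} line of $W_2$ is $Z^*(G_2)$. The reason is that $X_2$ comes from the power map, which $(\phi,\delta)$ does not see, so an arbitrary isoclinism need not send $Z^*(G_1)$ to $Z^*(G_2)$. The missing ideas are these.
\begin{enumerate}
\item[(1)] For odd $p$ the power map is linear. So if $G^p=\langle a\rangle\neq 1$, then $X_2=V\otimes\langle a\rangle$, hence $G^p\subseteq Z^*(G)$; that is, $Z^*(G_i)=G_i^p$ whenever $G_i^p\neq 1$.
\item[(2)] If $G_1^p\neq 1=G_2^p$, then $\dim X^{(1)}=\dim X^{(2)}=\dim X_1^{(2)}=\dim X_1^{(1)}$ forces $X^{(1)}=X_1^{(1)}$. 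If both $G_i^p=1$, then $X=X_1$ for both groups. In either case every isoclinism carries $Z^*(G_1)$ onto $Z^*(G_2)$.
\item[(3)] If both $G_i^p\neq 1$, non-unicentrality forces the commutator form modulo $A_i=G_i^p$ to have rank $2$; otherwise $X_1$ maps onto $V\otimes(W/A_i)$ and $X=V\otimes W$. So $\delta(A_1)$ and $A_2$ are both ``rank-$2$ lines'' for $G_2$.
\begin{itemize}
\item For $d\ge 5$ there is at most one such line, since two of them give a common radical of codimension $\le 4$, contradicting $Z(G_2)=G_2'$.
\item For $d=4$ there can be two such lines (commutator structure of $\mathrm{ES}_p(p^3)\times \mathrm{ES}_p(p^3)$).
\item For $d=3$ every line qualifies.
\end{itemize}
In the last two cases you must exhibit automorphisms of the commutator structure moving one line to the other.
\end{enumerate}
This is the case analysis the paper carries out, using explicit presentations, the lists of \cite{james} for $d=4$, and \cite[Theorem 4.3]{TGRIP} for $d=3$. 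Your proposal leaves all of it open.

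\textbf{Step 3 has a separate error.} Two central extensions of $G_i/Z(G_i)$ by $\bZ/p\bZ$ with matching commutator data need not be isomorphic, because the power maps must also correspond. So ``the commutator structure is preserved by $\delta$'' does not produce $\phi'\colon G_1/A_1\to G_2/A_2$. What makes the lift exist is again $G_i^p\subseteq A_i$. Then both $G_i/A_i$ are class-$2$ groups of exponent $p$, in fact $\cong \mathrm{ES}_p(p^3)\times(\bZ/p\bZ)^{d-2}$. For odd $p$ such a group is determined by its commutator map, so any commutator-compatible pair $(\bar\phi,\delta)$ with $\delta(A_1)=A_2$ lifts. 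With (1)--(3) and this lifting argument, your $X$-based route would go through and would arguably be more uniform than the paper's presentation-by-presentation analysis. As written, however, both the decisive matching step and the lifting step are missing.
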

 
 \begin{proof}
Since $G_1$ and $G_2$ are isoclinic, $\bC G_1 \cong \bC G_2$. Note that $|G_1|=|G_2|$, and it follows from \cite{schurrank2} that  $|\M(G_i)|=p^{\frac{1}{2}d(d-1)-t}$ with $t=0$ or $2$, where $d=d(G_i)$. 

If $|\M(G_i)|=p^{\frac{1}{2}d(d-1)-2}$ for $i=1,2$, then $Z^*(G_i)=Z(G_i)$, by \cite{schurrank2}. Hence,  by Theorem $\ref{TGRIP unicentral}$, $G_1\sim_{\bC} G_2$.

 Now, let $|\M(G_i)|=p^{\frac{1}{2}d(d-1)}$, then by \cite{schurrank2} $Z^*(G_i) \cong \bZ/p\bZ$. Suppose $G_1=\langle x_1,x_2,\ldots,x_d \rangle$ and $G_2=\langle x_1',x_2',\ldots ,x_d'\rangle$ 
 We consider the following cases by comparing $G_1^p$ and $G_2^p$. \\
	 
 \textbf{Case (a):} First assume that $ Z^*(G_i)=G_i^p \cong \bZ/p\bZ$. Then it follows from \cite[Theorem 1.3(d)]{schurrank2},	$G_i/{G_i}^p \cong \mathrm{ES}_p(p^{3})\times (\bZ/p\bZ)^{d-2}$.	
Let
$$G_1/G_1^p=\langle \bar{x}_1,\bar{x}_2 \mid [\bar{x}_1,\bar{x}_2]=\bar{z}_1 \rangle \times \langle \bar{x}_3 \rangle \times \cdots \times \langle \bar{x}_d \rangle.$$
Then $G_1$ has the following type of presentation,
$$G_1=\langle x_1,x_2,\ldots,x_d \mid [x_1,x_2]=z_1, [x_i,x_j]=z_2^{i * j},1 \le i <j \le d, z_2 \in G_1^p  \rangle,$$
for $i * j \in \{0,1,2,\cdots, p-1\}$ and $(i,j)\neq (1,2). $ 
Now since $G_1$ and $G_2$ are isoclinic, $G_2$ has the following type presentation,
$$G_2=\langle x_1',x_2',\ldots,x_d' \mid [x_1',x_2']=z_1', [x_i',x_j']=z_2'^{i * j},1 \le i <j \le d \rangle. $$
We first assume that  there are $s,t \notin \{1,2\}$ such that $[x_s',x_t']=z_2'^{s * t}\neq 1$. If possible, let $G_2^p= \langle {z_1'}^m{z_2'}^n \rangle$ for  some $m,n \in \{1,2,\cdots, p-1\}$. Then
$$G_2/G_2^p = \langle  \bar{x}_1',\bar{x}_2', \ldots, \bar{x}_d' \mid [\bar{x}_1',\bar{x}_2']=\bar{z}_1', [\bar{x}_s',\bar{x}_t']=\bar{z}_1'^{-(s * t)mn^{-1}}, [\bar{x}_i',\bar{x}_j']=\bar{z}_1'^{-(i*j)mn^{-1}} \rangle. $$
But this contradicts the fact that $G_2/G_2^p \cong \mathrm{ES}_p(p^{3})\times (\bZ/p\bZ)^{d-2}$. Hence, $G_2^p=\langle z_1' \rangle$ or $G_2^p=\langle z_2' \rangle$. 
Suppose $G_2^p=\langle z_1' \rangle$. 
Since $G_2/G_2^p \cong \mathrm{ES}_p(p^{3})\times (\bZ/p\bZ)^{d-2}$, there can not be  distinct $s,t, u,v \in \{1, 2, \cdots, d\}$ such that $[\bar x_s',\bar x_t']$ and $[\bar x_u', \bar x_v']$ are non-trivial in $G_2/G_2^p$.
Therefore, for $d \geq 5$, $G_2$ will be of following types:
\begin{align*}
	G_2  = & \langle {x}_1',{x}_2', x_s',x_t',x_u' \mid [{x}_1',{x}_2']={z}_1', [{x}_s',{x}_t']={z}_2'^{s * t}, [{x}_s',{x}_u']={z}_2'^{s*u},[{x}_t',{x}_u']={z}_2'^{t*u} \rangle\\
	\cong & \langle {x}_1',~{x}_2',~{x}_s',~ {x}_t',~ {x}_s'^{-(t*u)(s * t)^{-1}}{x}_u'^{-1}x_t'^{(s*u)(s * t)^{-1}} \mid [{x}_1',{x}_2']={z}_1',[{x}_s',{x}_t']={z}_2'^{s * t} \rangle,
\end{align*}
\begin{align*}
	\text{or,} ~	G_2  = & \langle {x}_1',{x}_2', \ldots, {x}_d' \mid [{x}_1',{x}_2']={z}_1', [{x}_s',{x}_t']={z}_2'^{s * t}, [{x}_s',{x}_j']={z}_2'^{s*j}, 1 \le j \le d, j \neq t \rangle\\
	\cong & \langle {x}_s',~{x}_t',~ {x}_t'^{-(s*j)(s * t)^{-1}}{x}_j', 1 \le j \le d, j \neq t \mid [{x}_s',{x}_t']={z}_2'^{s * t},\\
	&	[{x}_t'^{-(s*1)(s * t)^{-1}}{x}_1',{x}_t'^{-(s*2)(s * t)^{-1}}{x}_2']=z_1' \rangle,
\end{align*}
and these groups are not special $p$-groups of rank 2. 
For $d= 4$, $|G_2|=p^6$, so by \cite{james} it follows that,
$G_2$ is isomorphic to $\Phi_{12}(21^4)a$ or $\Phi_{12}(21^4)d$. In this case, $G_1$ is isomorphic to $\Phi_{12}(21^4)c$. Then, it is easy to check that $G_1$ and $G_2$ are isoclinic via the central subgroups $(G_1^p,G_2^p)$.  
Now taking $A_i=G_i^p$ in Theorem \ref{TGRIP isoclinic}, $G_1 \sim_\bC G_2$.

Next, let there be no such non-trivial commutator $[x_i',x_j']$ in the presentation of $G_2$ such that $i,j \notin \{1,2\}$, then $G_2$ is of the following type: 
$$G_2=\langle x_1',x_2',\ldots,x_d' \mid [x_1',x_2']=z_1', [x_1',x_j']=z_2'^{1 * j},[x_2',x_j']=z_2'^{2 * j},3 \le j \le d \rangle. $$

Again if $G_2^p= \langle {z_1'}^m{z_2'}^n \rangle$ for  some $m,n \in \{1,2,\cdots, p-1\}$ or $G_2^p=\langle z_1' \rangle$, then since $G_2/G_2^p \cong \mathrm{ES}_p(p^{3})\times (\bZ/p\bZ)^{d-2}$, $G_2$ are of the following types:
\begin{align*}
	G_2 & =\langle x_1',x_2',x_3' \mid [x_1',x_2']=z_1', [x_2',x_3']=z_2'^{l_1}, [x_1',x_3']=z_2'^{l_2}, z_2' \in G_2^p  \rangle \\
	& \cong \langle x_2',x_3',x_2^{-l_2l_1^{-1}}x_1 \mid [x_2',x_3']=z_2'^{l_2},[x_2',x_2'^{-l_2l_1^{-1}}x_1']=z_1'^{-1}, z_2' \in G_2^p \rangle,
\end{align*}
\begin{align*}
	\text{or,}\quad G_2 & = \langle x_1',x_2',\ldots,x_d' \mid [x_1',x_2']=z_1', [x_1',x_j']=z_2'^{l_j},3 \le j \le d, z_2' \in G_2^p  \rangle\\
	& \cong \langle x_1',x_2',x_3', x_3'^{-l_jl_3^{-1}}x_j', 4 \le j\le d \mid [x_1',x_2']=z_1',[x_1',x_3']=z_2'^{l_3},z_2' \in G_2^p \rangle.
\end{align*}
For $d\geq 4$, $G_2$ are not a special $p$-group of rank $2$. For $d=3$, $|G_i|=p^5$ and by \cite{james}, each $G_i$ is isomorphic to one of the following groups.
$$\{\Phi_4(2111)a,\Phi_4(2111)b,\Phi_4(2111)c\}.$$
Then they are in the same $\bC$-twist isomorphic class, follows from \cite[Theorem 4.3]{TGRIP}.

Finally, if $G_2^p=\langle z_2' \rangle$, then
$G_2/G_2^p=\langle \bar{x}_1',\bar{x}_2' \mid [\bar{x}_1',\bar{x}_2']=\bar{z}_1' \rangle \times \langle \bar{x}_3' \rangle \times \cdots \times \langle \bar{x}_d' \rangle.$
Thus, it is easy to see that $G_1$ and $G_2$ are isoclinic via the central subgroups $(G_1^p,G_2^p)$.  
Now taking $A_i=G_i^p$ in Theorem \ref{TGRIP isoclinic}, $G_1 \sim_\bC G_2$.
	
 	\textbf{Case (b):} Next consider $G_i^p=1$. Since up to isomorphism there is only one special $p$-group of exponent $p$ in a fixed isoclinic family, we are done.
 	 	
 	\textbf{Case (c):} 
 Now we assume that $G_1^p =Z^*(G_1)$ and $G_2^p=1$.  
 By \cite[Theorem 5.1(iv),(v)]{mazur}, $G_2$ will have the following presentations.
 
 (i) Let $d=2m+1$ where $m \ge 2$. Then 
 \begin{equation} \label{presentation of G_2}
 	G_2=\langle x_i',z_1',z_2', 1\leq i \leq 2m+1\mid x_i'^p=z_j'^p=1,[x_1',x_{2m+1}']=z_1',[x_k',x_{m+k}']=z_2', 1\leq k \leq m \rangle
 \end{equation}
 with $Z^*(G_2)=\langle z_2' \rangle.$
 
 (ii) let $d=2m$, where $m \ge 3$. Then
 $$G_2= \langle x_i',z_1',z_2', 1\leq i \leq 2m \mid x_i'^p=z_j'^p=1, [x_1',x_{m+1}']=z'_1,[x_k',x'_{m+k}]=z'_2, 2 \le k \le m \rangle $$
 or
 $$G_2= \langle x_i',z_1',z_2', 1\leq i \leq 2m \mid x_i'^p=z_j'^p=1, [x_1',x_{m+2}']=z'_1,[x_k',x'_{m+k}]=z'_2, 1 \le k \le m \rangle. $$
Consider $d=2m+1$, then $G_2$ is of the form given in \eqref{presentation of G_2}. Now since by \cite[Theorem 1.3(d), 1.4(h)]{schurrank2}, $G_i/Z^*(G_i) \cong \mathrm{ES}_p(p^{3})\times (\bZ/p\bZ)^{d-2}$ and they are isoclinic, we must have 
 	$$G_1=\langle x_i,z_1,z_2, 1\leq i \leq 2m+1\mid [x_1,x_{2m+1}]=z_1,[x_k,x_{m+k}]=z_2 \text{ for } 1 \le k \le m, z_2 \in G_1^p \rangle.$$
 	Therefore, taking $A_i=Z^*(G_i)$ in  Theorem \ref{TGRIP isoclinic}, the result follows. For other possibilities of $G_2$ proof follows similarly. 
	
 \end{proof}

 \textbf{Proof of Theorem \ref{TGRIP special p group}.}
 \begin{proof}
 Proof follows from Theorem \ref{TGRIP unicentral} and Theorem \ref{TGRIP rank 2}.
 \end{proof}
 
\subsection{Solution of (TGRIP) for Special $p$-Groups of Order $p^6$}\label{Application_2}
As an application of Theorem~\ref{TGRIP rank 2}, we solve (TGRIP) for the special $p$-groups of rank $2$ and of order $p^6$. These groups belong to the isoclinism families $\Phi_{12}$, $\Phi_{13}$ and $\Phi_{15}$. The following result also correct the list given in \cite[Theorem 3.2]{TGRIP}.
\begin{thm}
All non-singleton $\mathbb{C}$-twist isomorphism classes of special $p$-groups of rank $2$ and order $p^6$ are the following:
\begin{enumerate}[(i)]
\item $\Phi_{12}(2211)a \sim_{\mathbb C} \Phi_{12}(2211)b
\sim_{\mathbb C} \Phi_{12}(2211)c \sim_{\mathbb C}
\Phi_{12}(2211)e \sim_{\mathbb C} \Phi_{12}(2211)f \sim_{\mathbb C}\\
\Phi_{12}(2211)g
\sim_{\mathbb C}
\Phi_{12}(2211)h
\sim_{\mathbb C}
\Phi_{12}(2211)i
\sim_{\mathbb C}
\Phi_{12}(21^4)b
\sim_{\mathbb C}
\Phi_{12}(21^4)e$;

\item $\Phi_{12}(21^4)a
\sim_{\mathbb C}
\Phi_{12}(21^4)c
\sim_{\mathbb C}
\Phi_{12}(21^4)d$;

\item $\Phi_{13}(2211)a
\sim_{\mathbb C}
\Phi_{13}(2211)b
\sim_{\mathbb C}
\Phi_{13}(2211)c_r
\sim_{\mathbb C}
\Phi_{13}(2211)d
\sim_{\mathbb C}
\Phi_{13}(2211)e_r\ (r\neq1)
\sim_{\mathbb C}
\Phi_{13}(2211)f
\sim_{\mathbb C}
\Phi_{13}(21^4)a
\sim_{\mathbb C}
\Phi_{13}(21^4)d$;

\item $\Phi_{13}(21^4)b
\sim_{\mathbb C}
\Phi_{13}(21^4)c$;

\item $\Phi_{15}(2211)a
\sim_{\mathbb C}
\Phi_{15}(2211)b_{r,s}
\sim_{\mathbb C}
\Phi_{15}(2211)c
\sim_{\mathbb C}
\Phi_{15}(2211)d_r\ (r\neq1) \sim_{\mathbb C} \Phi_{15}(21^4)$.
\end{enumerate}
\end{thm}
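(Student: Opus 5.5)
The plan has three parts: classify the groups, prove the listed equivalences, then separate the remaining groups.

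\textbf{Classification.} By \cite{james}, the special $p$-groups of rank $2$ and order $p^6$ lie in the isoclinism families $\Phi_{12}$, $\Phi_{13}$ and $\Phi_{15}$, and in each case $d(G)=4$. Within one family all groups are isoclinic, and all have order $p^6$ with $G'=Z(G)$. For each group I first decide whether it is capable, using Theorem~\ref{epicentre2} to test whether $(G/G')\otimes Z(G)\subseteq X$. I then compute $\M(G)$. By \cite{schurrank2}, $|\M(G)|=p^{6-t}$ with $t\in\{0,2\}$, so $|\M(G)|$ is either $p^6$ or $p^4$. Since $G$ has exponent dividing $p^2$ and is of class $2$, the multiplier is elementary abelian, and $t$ is decided by whether $Z^*(G)=Z(G)$ or $Z^*(G)\cong\bZ/p\bZ$. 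I would carry out these computations with the exterior-square method of \cite{hatuip5}, as was done for $\Phi_4(2211)g$ in the proof of Theorem~\ref{Schur Phi_4}. The result is a partition of each family into groups with $|\M|=p^4$ (unicentral) and groups with $|\M|=p^6$ ($Z^*\cong\bZ/p\bZ$).

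\textbf{Equivalences.} Inside one isoclinism family, two non-capable groups with isomorphic multipliers are $\mathbb C$-twist isomorphic by Theorem~\ref{TGRIP rank 2}. This gives the listed classes (i)--(v); for example, (i) and (ii) split $\Phi_{12}$ by multiplier size. Any capable group in a family has to be handled separately. Following the remark after Theorem~\ref{TGRIP special p group}, I would either exhibit its representation group and apply Theorem~\ref{representation related}, or show directly that it is a singleton class. The excluded parameter values, such as $r=\tfrac12(p-1)$ in analogous families and $r=1$ in $\Phi_{13}(2211)e_r$ and $\Phi_{15}(2211)d_r$, should be exactly the capable groups or the groups whose multiplier differs.

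\textbf{Separation.} To show the listed classes are all the non-singleton ones, I use the necessary conditions for $G_1\sim_{\mathbb C}G_2$. These are $\M(G_1)\cong\M(G_2)$ and $\bC G_1\cong\bC G_2$, the latter taking $\alpha$ trivial. Groups in different families have non-isomorphic complex group algebras, since $\Phi_{12}$, $\Phi_{13}$ and $\Phi_{15}$ have different character degree patterns. Groups with different multipliers are likewise never related. For a capable group whose multiplier agrees with some non-capable group, I would use the finer invariant $\bC\tilde G_1\cong\bC\tilde G_2$ for representation groups $\tilde G_i$, or compare the twisted algebras $\bC^\alpha G$ for specific $\alpha$ as in \cite{TGRIP}.

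\textbf{Main obstacle.} I expect the hardest step to be the exceptional parameter cases, namely the capable groups such as $\Phi_{13}(2211)e_1$ and $\Phi_{15}(2211)d_1$, and correcting \cite[Theorem 3.2]{TGRIP}. For these, I would first try to show that their twisted algebra decompositions, obtained from the projective degrees via the transgression computation of Theorem~\ref{representation tra}, differ from those of every other group in the family.
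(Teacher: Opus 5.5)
\textbf{Verdict: genuine gap, easily closed.} Your architecture matches the paper's:
\begin{itemize}
\item separate $\Phi_{12},\Phi_{13},\Phi_{15}$ by their non-isomorphic complex group algebras;
\item compute $\M(G)$ via Theorem~\ref{epicentre2} and \cite{schurrank2};
\item obtain the listed classes from Theorem~\ref{TGRIP rank 2};
\item separate everything else by the necessary condition $\M(G_1)\cong\M(G_2)$.
\end{itemize}

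The gap is in your multiplier step. The count $|\M(G)|=p^{\frac12 d(d-1)-t}$ with $t\in\{0,2\}$ holds only for the non-capable groups. Also, class $2$ with exponent dividing $p^2$ does not force $\M(G)$ to be elementary abelian; already $\M(\bZ/p^2\bZ\times\bZ/p^2\bZ)\cong\bZ/p^2\bZ$. The capable groups in these families fall outside your dichotomy:
\begin{itemize}
\item The exponent-$p$ groups $\Phi_{12}(1^6)$, $\Phi_{13}(1^6)$, $\Phi_{15}(1^6)$ have $\M(G)\cong(\bZ/p\bZ)^8$. For $\Phi_{12}(1^6)$, a direct product of two extraspecial groups of order $p^3$ and exponent $p$, this follows at once from Theorem~\ref{Schur direct product} as $(\bZ/p\bZ)^2\times(\bZ/p\bZ)^2\times(\bZ/p\bZ)^4$.
\item The groups $\Phi_{12}(2211)d$, $\Phi_{13}(2211)e_1$ and $\Phi_{15}(2211)d_1$ have $\M(G)\cong\bZ/p^2\bZ\times(\bZ/p\bZ)^3$.
\end{itemize}
In particular, your partition of each family into groups with $|\M|=p^4$ and $|\M|=p^6$ omits the exponent-$p$ groups altogether.

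Because of this error, you expect the capable groups to share a multiplier with some non-capable group. You then defer their separation to a comparison of $\bC\tilde G$ or of selected twisted algebras, which you never carry out. As written, the proposal therefore does not show that these six groups are singleton classes. That is exactly what the assertion that (i)--(v) are \emph{all} the non-singleton classes requires.

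With the correct multipliers this step is immediate, and this is how the paper argues:
\begin{itemize}
\item each capable group's multiplier occurs for no other group in its isoclinism family;
\item groups in different families already have non-isomorphic $\bC G$;
\item $G_1\sim_{\bC}G_2$ forces $\M(G_1)\cong\M(G_2)$.
\end{itemize}
So your \emph{main obstacle} disappears, and no representation-group or transgression computations are needed for the exceptional groups.
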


\begin{proof}
By \cite[Table~4.1]{james}, the complex group algebras of the special $p$-groups in the isoclinism families $\Phi_{12}$, $\Phi_{13}$ and $\Phi_{15}$ are pairwise non-isomorphic. 
Therefore, each isoclinism class will form different $\bC$-twist classes. 

First, we compute Schur multipliers of the listed groups. 
 Consider $G=\Phi_{12}(2211)a$. Then $X_1$ is generated by the set $$\{-\alpha_2G'\otimes\alpha_1^p, \beta_2G'\otimes\alpha_1^p, \alpha_1G'\otimes\beta_1^p, -\beta_1G'\otimes\beta_1^p\},$$ while $X_2$ is generated by $$\{\alpha_1G'\otimes\alpha_1^p,\beta_1G'\otimes\beta_1^p,\alpha_2G'\otimes\alpha_1^p,\beta_2G'\otimes\alpha_1^p,\alpha_2G'\otimes\beta_1^p,\beta_2G'\otimes\beta_1^p,\alpha_1G'\otimes\beta_1^p+\beta_1G'\otimes\alpha_1^p\}.$$ Note that $\dim X=8$. Hence, by Theorem~\ref{epicentre2}  $Z^*(G)=Z(G)$, and  $\M(G)\cong(\mathbb{Z}/p\mathbb{Z})^4$ by \cite[Theorem~1.1(b)]{schurrank2}.
The remaining groups are handled similarly, using \cite[Theorems~1.1, 1.3 and 1.4]{schurrank2} and  their Schur multipliers are given as follows:
\begin{enumerate}[(a)]
\item $\M(G)\cong(\mathbb{Z}/p\mathbb{Z})^4$ if $G$ is isomorphic to one of $\Phi_{12}(2211)a$, $\Phi_{12}(2211)b$, $\Phi_{12}(2211)c$, $\Phi_{12}(2211)e$, $\Phi_{12}(2211)f$, $\Phi_{12}(2211)g$, $\Phi_{12}(2211)h$, $\Phi_{12}(2211)i$, $\Phi_{12}(21^4)b$, $\Phi_{12}(21^4)e$,\\ $\Phi_{13}(2211)a, \Phi_{13}(2211)b, 
\Phi_{13}(2211)c_r,\Phi_{13}(2211)d , \Phi_{13}(2211)e_r (r \neq 1) , \Phi_{13}(2211)f,$\\ $\Phi_{13}(21^4)a , \Phi_{13}(21^4)d,
\Phi_{15}(2211)a,  \Phi_{15}(2211)b_{r,s} ,\Phi_{15}(2211)c,$  $\Phi_{15}(2211)d_r(r \neq 1)$ or \\ $\Phi_{15}(21^4)$;

\item $\M(G)\cong(\mathbb{Z}/p\mathbb{Z})^6$ if $G$ is isomorphic to $\Phi_{12}(21^4)a$, $\Phi_{12}(21^4)c$, $\Phi_{12}(21^4)d,\Phi_{13}(21^4)b$,\\
or $\Phi_{13}(21^4)c$;

\item $\M(G)\cong\mathbb{Z}/p^2\mathbb{Z}\times(\mathbb{Z}/p\mathbb{Z})^3$ if $G$ is isomorphic to $\Phi_{12}(2211)d,\Phi_{13}(2211)e$, or $\Phi_{15}(2211)d$;

\item $\M(G)\cong(\mathbb{Z}/p\mathbb{Z})^8$ if $G$ is isomorphic to $\Phi_{12}(1^6),\Phi_{13}(1^6)$, or $\Phi_{15}(1^6)$.
\end{enumerate}
 By Theorem~\ref{TGRIP rank 2}, we have the non-singleton $\mathbb{C}$-twist isomorphism classes listed above.
\end{proof} 
 	\section*{Acknowledgements} 
 The authors acknowledge the support of the National Institute of Science Education and Research (NISER), Bhubaneswar and Homi Bhaba National Institute (HBNI), Mumbai.

      \bibliographystyle{amsplain}
  \bibliography{ref1}
 
 \end{document}